\newcommand{\bignone}{}
\newcommand{\nin}{\not\in}
\newcommand{\tmem}[1]{{\em #1\/}}
\newcommand{\tmname}[1]{\textsc{#1}}
\newcommand{\tmop}[1]{\ensuremath{\operatorname{#1}}}
\newcommand{\tmsamp}[1]{\textsf{#1}}
\newcommand{\tmstrong}[1]{\textbf{#1}}
\newcommand{\tmtextit}[1]{{\itshape{#1}}}
\newcommand{\tmtexttt}[1]{{\ttfamily{#1}}}
\newenvironment{enumeratealpha}{\begin{enumerate}[a{\textup{)}}] }{\end{enumerate}}
\newenvironment{itemizeminus}{\begin{itemize} }{\end{itemize}}
\newtheorem{definition}{Definition}
\newtheorem{lemma}{Lemma}
\newtheorem{theorem}{Theorem}
\begin{document}

{\tmsamp{{\tmsamp{\tmtexttt{{\tmstrong{{\tmem{}}}}}}}}}

\title[Non SCH Without Large Cardinals]{Violating the Singular 
Cardinals Hypothesis Without Large
Cardinals}

\author[Moti Gitik]{Moti Gitik\thanks{The first author was partly supported by ISF Grant
234/08.}}
\address{School of Mathematical Sciences, Tel Aviv University}
\email{gitik@post.tau.ac.il}
\urladdr{http://www.math.tau.ac.il/\textasciitilde gitik}

\author[Peter Koepke]{Peter Koepke\thanks{The second author was partly supported by
DFG-NWO Bilateral Grant DFG KO 1353/5-1 / NWO 62-630}}
\address{Mathematical Institute, University of Bonn}
\email{koepke@math.uni-bonn.de}
\urladdr{http://www.math.uni-bonn.de/\textasciitilde koepke}

\begin{abstract}
  We extend a transitive model $V$ of $\tmop{ZFC} + \tmop{GCH}$ cardinal
  preservingly to a model $N$ of $\tmop{ZF}$ + ``$\tmop{GCH}$ holds below
  $\aleph_{\omega}$'' + ``there is a surjection from the power set of
  $\aleph_{\omega}$ onto $\lambda$'' where $\lambda$ is an arbitrarily high
  fixed cardinal in $V$. The construction can be described as follows:
  add $\aleph_{n + 1}$ many {\tmname{Cohen}} subsets of $\aleph_{n + 1}$ for
  every $n < \omega$, and adjoin $\lambda$ many subsets of $\aleph_{\omega}$
  which are unions of $\omega$-sequences of those {\tmname{Cohen}} subsets;
  then let $N$ be a choiceless submodel generated by equivalence classes of
  the $\lambda$ subsets of $\aleph_{\omega}$ modulo an appropriate equivalence
  relation.
\end{abstract}

\maketitle

In {\cite{ApterKoepke}}, {\tmname{Arthur Apter}} and the second author
constructed a model of ZF + ``GCH holds below $\aleph_{\omega}$'' + ``there is
a surjection from $[\aleph_{\omega}]^{\omega}$ onto $\lambda$'' where
$\lambda$ is an arbitrarily high fixed cardinal in the ground model $V$. This
amounts to a strong {\tmem{surjective}} violation of the {\tmem{singular
cardinals hypothesis}} SCH. The construction assumed a measurable cardinal in
the ground model. It was also shown in {\cite{ApterKoepke}} that a measurable
cardinal in some inner model is necessary for that combinatorial property,
using the {\tmname{Dodd-Jensen}} covering theorem {\cite{DoddJensen}}.

In this paper we show that one can work without measurable cardinals if
one considers surjections from $\mathcal{P}(\aleph_{\omega})$ onto $\lambda$.  These surjections anyway seem to be more natural than
surjections from $[\aleph_{\omega}]^{\omega}$ onto $\lambda$.

\begin{theorem}
  \label{Theorem}Let $V$ be any ground model of $\tmop{ZFC} + \tmop{GCH}$ and
  let $\lambda$ be some cardinal in $V$. Then there is a cardinal preserving
  model $N \supseteq V$ of the theory $\tmop{ZF} +$``$\tmop{GCH}$ holds below
  $\aleph_{\omega}$'' $+$ ``there is a surjection from
  $\mathcal{P}(\aleph_{\omega})$ onto $\lambda$''.
\end{theorem}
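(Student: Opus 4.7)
The plan is to follow the three-stage construction outlined in the abstract. First I would define $P = \prod_{n<\omega}\operatorname{Add}(\aleph_{n+1},\aleph_{n+1})$ with support chosen (Easton-style) so that $P$ preserves all cardinals and GCH; let $G$ be $P$-generic, and denote the resulting Cohen subsets by $c^n_\alpha \subseteq [\aleph_n,\aleph_{n+1})$ for $n<\omega$ and $\alpha<\aleph_{n+1}$. Second, over $V[G]$, I would use a further forcing $Q$ that generically adjoins $\lambda$ many ``selection sequences'' $\langle f_\xi : \xi<\lambda\rangle$ with $f_\xi\in\prod_n\aleph_{n+1}$; in $V[G][H]$ (with $H$ being $Q$-generic) this produces the $\lambda$ subsets
\[
A_\xi \;=\; \bigcup_{n<\omega} c^n_{f_\xi(n)} \;\subseteq\; \aleph_\omega, \qquad \xi<\lambda.
\]
The design of $Q$ must be such that $P * \dot Q$ preserves cardinals and adds no new bounded subsets of $\aleph_\omega$, so that GCH below $\aleph_\omega$ survives in the final model.

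Third, I would form the symmetric submodel $N$. Define the tail equivalence $A \sim B$ iff $A\triangle B$ is bounded in $\aleph_\omega$ (equivalently, $A$ and $B$ agree on $[\aleph_n,\aleph_{n+1})$ for cofinitely many $n$), set $E_\xi := [A_\xi]_\sim$, and let $N := V(\langle E_\xi : \xi<\lambda\rangle)$, the smallest transitive ZF-class containing $V$ together with the sequence of equivalence classes. In the automorphism/filter picture this corresponds to taking $\mathcal{G}$ to be the group of automorphisms $\pi = \langle\pi_n\rangle$ of $P$ with each $\pi_n \in \operatorname{Sym}(\aleph_{n+1})$ permuting the Cohen indices at level $n$ and $\pi_n = \mathrm{id}$ for all but finitely many $n$, extended trivially to $P * \dot Q$. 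The key computation is that each such $\pi$ sends $\dot A_\xi$ to a name for $\bigcup_n c^n_{\pi_n(f_\xi(n))}$, which agrees with $A_\xi$ above some $\aleph_N$; hence $\pi$ fixes each class $E_\xi$. With the natural normal filter generated by pointwise stabilisers of the Cohen structure, the function $\xi\mapsto E_\xi$ is hereditarily symmetric and lies in $N$. Arranging the $f_\xi$'s to be pairwise eventually distinct (the generic behaviour of $Q$) makes the $E_\xi$'s pairwise different, and then the map $F:\mathcal{P}(\aleph_\omega)\to\lambda$ with $F(A)=\xi$ iff $A\in E_\xi$ (and $F(A)=0$ otherwise) is the desired surjection in $N$.

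The main obstacle is coordinating $Q$ with the symmetric construction so that $N\models\operatorname{ZF}$ with cardinal preservation and GCH below $\aleph_\omega$. $Q$ cannot be a simple ccc-style product (which would raise $2^{\aleph_0}$ past $\aleph_1$ and destroy GCH at $\aleph_0$) nor fully $\aleph_\omega$-closed (which typically collapses $\aleph_{\omega+1}$); a specially designed tree-like forcing is needed, mimicking the combinatorial effect of Prikry forcing but without the measurable cardinal that \cite{ApterKoepke} assumed. On the symmetric side, one must then verify that no new bounded subsets of $\aleph_\omega$ enter $N$ -- by transitivity of $N$, any such subset would appear as $A\cap\aleph_n$ for some $A\in E_\xi$, so ultimately the requirement reduces to $Q$ itself adding no new bounded subsets of $\aleph_\omega$. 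With such a $Q$ in hand, the verification that $N$ has the asserted properties is a delicate but routine symmetric-model calculation.
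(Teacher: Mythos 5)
The high-level skeleton you describe — a Cohen matrix on the intervals $[\aleph_n,\aleph_{n+1})$, $\lambda$ many generically-selected $\omega$-unions of Cohen columns $A_\xi$, an equivalence relation, and a HOD-style symmetric submodel generated by the classes — matches the abstract, but the proposal leaves the central technical problem unsolved. You pose a two-step iteration $P\ast\dot Q$ where $Q$ adds $\lambda$ many ``selector'' functions $f_\xi\in\prod_n\aleph_{n+1}$ over $V[G]$, and you explicitly acknowledge that you do not know how to design $Q$ so that $P\ast\dot Q$ preserves cardinals, adds no harmful bounded subsets of $\aleph_\omega$, and is compatible with the automorphism group. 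That design \emph{is} the theorem. The paper resolves it by collapsing your two stages into a single forcing $P$ (Definition~\ref{def_of_forcing}): a condition carries simultaneously a finite fragment $p_\ast$ of a two-dimensional Cohen matrix, finitely many fragments $p_i:\bigcup_n[\aleph_n,\delta_n)\to 2$, and finite \emph{linking sets} $a_i$, subject to a linking clause (extensions of $p_i$ inside $[\aleph_n,\aleph_{n+1})$ must copy the $\xi$-th column of $p_\ast$, where $\xi=a_i\cap[\aleph_n,\aleph_{n+1})$) and an independence clause (new linking ordinals for distinct $i,j$ in the support stay disjoint). This is not a routine choice: without the linking clause the $A_i$ are not eventually Cohen columns, without the independence clause the pairwise-distinctness and product-analysis arguments break, and it is precisely this structure that supports the partial isomorphisms $\pi:P\upharpoonright p\to P\upharpoonright p'$ used in the approximation lemma.

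Two further points would make your proposal diverge from something that actually works. First, your equivalence relation (bounded symmetric difference) is not the one the paper uses. The paper's relation $A\sim A'$ asks that $(A\oplus A')\upharpoonright\aleph_{n+1}\in V[G_\ast]$ and $(A\oplus A')\upharpoonright[\aleph_{n+1},\aleph_\omega)\in V$ for some $n$. This is needed because the $A_i$ produced by the paper's forcing agree with Cohen columns only \emph{above} the finite initial stems $p_i$ on each interval $[\aleph_k,\aleph_{k+1})$, so the partial isomorphism $\pi$ sends $A_i$ to an $A_i'$ whose symmetric difference with $A_i$ is unbounded in $\aleph_\omega$ (finitely many disagreements per interval, for cofinitely many intervals); your tail-equivalence would separate $A_i$ from $A_i'$ and break the invariance of $\vec A$. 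Your relation would be adequate only if the $A_\xi$ were \emph{exactly} unions of Cohen columns, which in turn would force you to use genuine automorphisms of the whole forcing rather than cone isomorphisms — and it is unclear such automorphisms exist. Second, the claim that ``the verification that $N$ has the asserted properties is a delicate but routine symmetric-model calculation'' understates the hard part: the approximation lemma (every set of ordinals in $N$ lies in $V[A_\ast\upharpoonright(\aleph_{n+1})^2,A_{i_0},\ldots,A_{i_{l-1}}]$) and the ensuing cardinal/GCH preservation arguments are the substance of the proof, and they hinge on the particular forcing design you have not supplied.
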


Note that in the presence of the axiom of choice ($\tmop{AC}$) the latter
theory for $\lambda \geqslant \aleph_{\omega + 2}$ has large consistency
strength and implies the existence of measurable cardinals of high
{\tmname{Mitchell}} orders in some inner model (see {\cite{Gitik}} by the
first author). The pcf-theory of {\tmname{Saharon Shelah}} {\cite{Shelahpcf}}
shows that the situation for $\lambda \geqslant \aleph_{\omega_4}$ is
incompatible with $\tmop{AC}$. Hence Theorem \ref{Theorem} yields a choiceless
violation of pcf-theory without the use of large cardinals.

\section{The forcing}

Fix a ground model $V$ of $\tmop{ZFC} + \tmop{GCH}$ and let $\lambda$ be some
regular cardinal in $V$. We first present two building blocks of our
construction. The forcing $P_0 = (P_0, \supseteq, \emptyset)$ adjoins one
{\tmname{Cohen}} subset of $\aleph_{n + 1}$ for every $n < \omega$.
\[ P_0 =\{p | \exists (\delta_n)_{n < \omega} (\forall n < \omega : \delta_n
   \in [\aleph_n, \aleph_{n + 1}) \wedge p : \bigcup_{n < \omega} [\aleph_n,
   \delta_n) \rightarrow 2)\}. \]
Adjoining one {\tmname{Cohen}} subset of $\aleph_{n + 1}$ for every
$n < \omega$ is equivalent to adjoining $\aleph_{n + 1}$-many by the following
``two-dimensional'' forcing $(P_{\ast}, \supseteq, \emptyset)$:
\[ P_{\ast} =\{p_{\ast} | \exists (\delta_n)_{n < \omega} (\forall n < \omega
   : \delta_n \in [\aleph_n, \aleph_{n + 1}) \wedge p_{\ast} : \bigcup_{n <
   \omega} [\aleph_n, \delta_n)^2 \rightarrow 2)\}. \]
For $p_{\ast} \in P_{\ast}$ and $\xi \in [\aleph_0, \aleph_{\omega})$ let
\[p_{\ast} (\xi) =\{(\zeta, p_{\ast} (\xi, \zeta)) | (\xi, \zeta) \in
\tmop{dom} (p_{\ast})\}\] be the $\xi$-th section of $p_{\ast}$.

\begin{lemma}
  \label{preservation}Forcing with $P_0$ (and equivalently $P_{\ast} $)
  preserves cardinals and the $\tmop{GCH}$.
\end{lemma}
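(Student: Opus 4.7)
\emph{Approach.} My plan is to exploit the product factorization $P_0 \cong A_n \times B_n$, where $A_n$ consists of conditions restricted to $\bigcup_{k<n}[\aleph_k,\delta_k)$ and $B_n$ of conditions restricted to $\bigcup_{k\geq n}[\aleph_k,\delta_k)$. The split point $n$ will be tuned to the cardinal $\aleph_m$ or the instance of GCH under consideration.

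\emph{Key estimates.} The upper piece $B_n$ should be strongly closed: given a descending sequence in $B_n$ of length $\kappa<\aleph_{n+1}$, at each level $k\geq n$ the boundary ordinals $\delta_k^\alpha<\aleph_{k+1}$ are non-decreasing, and $\kappa<\aleph_{n+1}\leq\aleph_{k+1}$, so by regularity of $\aleph_{k+1}$ their supremum stays below $\aleph_{k+1}$; the coordinate-wise union is therefore a condition and a lower bound. Hence $B_n$ is $\aleph_{n+1}$-closed and in particular adds no subset of $\aleph_n$. The lower piece $A_n$ is, up to obvious re-indexing, $\prod_{k<n}\tmop{Add}(\aleph_{k+1},1)$; under GCH a direct count gives $|A_n|=\aleph_n$, so $A_n$ is trivially $\aleph_{n+1}$-c.c.

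\emph{Conclusion and GCH.} To see that $\aleph_m$ is preserved and $2^{\aleph_m}=\aleph_{m+1}$ in the extension, apply the factorization with $n=m+1$: the upper factor $B_{m+1}$ is $\aleph_{m+2}$-closed, so every new subset of $\aleph_m$ already appears in the generic extension by the lower factor $A_{m+1}$, of which a nice-names count produces at most $|A_{m+1}|^{\aleph_m}=\aleph_{m+1}^{\aleph_m}=\aleph_{m+1}$ many, giving GCH at $\aleph_m$. Cardinal preservation inside the finite product $A_{m+1}$ itself is a routine induction on $k\leq m$: at step $k$ the factor $\tmop{Add}(\aleph_{k+1},1)$ is $\aleph_{k+1}$-closed while the partial product $A_k$ is of size $\aleph_k$ and hence $\aleph_{k+1}$-c.c., and Easton's lemma delivers preservation of $\aleph_{k+1}$; the resulting $A_{k+1}$ has cardinality $\aleph_{k+1}$, feeding the next step.

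\emph{Main obstacle.} The only delicate point is that the full $\omega$-support nature of $P_0$ might a priori spoil the closure of $B_n$; the regularity argument above shows that the rapid growth of the level cardinals $\aleph_{k+1}$ is precisely what rescues it, and the same reasoning applies verbatim to $P_*$, since $|\aleph_k^2|=\aleph_k$ leaves all cardinality and closure estimates for the two-dimensional variant unchanged.
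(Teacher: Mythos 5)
Your approach is essentially the paper's: factor $P_0$ into a finite lower product $A_n$ and a closed upper tail $B_n$, tune the split point $n$ to the cardinal under consideration, and count names. The closure argument for $B_n$ is correct, as is the remark that $P_\ast$ behaves the same. There are, however, two concrete problems.

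\emph{Off-by-one in the GCH count.} With the split at $n=m+1$, the lower factor $A_{m+1}\cong\prod_{k\le m}\tmop{Add}(\aleph_{k+1},1)$ has size $\aleph_{m+1}$ and --- as your own chain-condition remark already says --- is only $\aleph_{m+2}$-c.c.; indeed its top factor $\tmop{Add}(\aleph_{m+1},1)$ has an antichain of size $\aleph_{m+1}$ (fix a domain of order type $\aleph_m$ inside $[\aleph_m,\aleph_{m+1})$ and vary the values). A nice name for a subset of $\aleph_m$ assigns to each $\nu<\aleph_m$ an antichain of $A_{m+1}$, and the number of antichains in a poset of size $\aleph_{m+1}$ without $\aleph_{m+1}$-c.c.\ is bounded only by $2^{\aleph_{m+1}}=\aleph_{m+2}$; so the nice-names count gives $\aleph_{m+2}^{\aleph_m}=\aleph_{m+2}$, not $\aleph_{m+1}$. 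The expression $|A_{m+1}|^{\aleph_m}$ would be correct if $A_{m+1}$ were $\aleph_{m+1}$-c.c., but it is not. The cure, and what the paper actually does, is to split at $n=m$: $B_m$ is still $<\aleph_{m+1}$-closed and so adds no subsets of $\aleph_m$, while $|A_m|=\aleph_m$, so names $\dot x\subseteq \aleph_m\times A_m$ number at most $2^{\aleph_m}=\aleph_{m+1}$. Equivalently, peel the top factor $\tmop{Add}(\aleph_{m+1},1)$ off $A_{m+1}$ and absorb it into the closed part --- which is exactly the $n=m$ split.

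\emph{Missing top of the hierarchy.} Your argument covers only the $\aleph_m$ for $m<\omega$. The lemma also asserts preservation of $\aleph_{\omega+1}$ and all larger cardinals, and GCH from $\aleph_\omega$ upward, and the product split has no analogue at $\kappa\ge\aleph_\omega$. The paper disposes of cardinals $>\aleph_{\omega+1}$ and GCH at $\kappa\ge\aleph_{\omega+1}$ by the crude size bound $\tmop{card}(P_0)\le\aleph_{\omega+1}$ (available from GCH in $V$), and then computes $(2^{\aleph_\omega})^{V[G_0]}=\aleph_{\omega+1}^V$ using the $<\aleph_1$-closure of $P_0$ (so $\aleph_\omega^{\aleph_0}$ is absolute) together with the GCH already established below $\aleph_\omega$; this last equality also shows, via Cantor's theorem, that $\aleph_{\omega+1}^V$ is not collapsed. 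You need to supply these steps to close the proof.
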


\begin{proof}
  Consider a $V$-generic filter $G_0$ on $P_0 $. By the $\tmop{GCH}$ in $V$,
  $\tmop{card}^V (P_0) \leqslant (\tmop{card}
  (\mathcal{P}(\aleph_{\omega})))^V = \aleph_{\omega + 1}^V $. Hence forcing
  with $P_0$ preserves all cardinals $> \aleph_{\omega + 1}^V $. Every subset
  $x \subseteq \kappa \in \tmop{Card}^V$, $x \in V [G_0]$ has a name $\dot{x}
  \in V$ of the form
  \[ \dot{x} \subseteq \{ \check{\nu} | \nu < \kappa\} \times P_0 . \]
  For $\kappa \geqslant \aleph^V_{\omega + 1} $, the $\tmop{GCH}$ is preserved
  by
  \[ (2^{\kappa})^{V [G_0]} \leqslant (\tmop{card} (\mathcal{P}(\kappa \times
     P_0)))^V = (\tmop{card} (\mathcal{P}(\kappa)))^V = (2^{\kappa})^V =
     (\kappa^+)^V . \]
  Preservation at cardinals $\aleph_k < \aleph_{\omega}$ is shown by a product
  analysis. In $V$, the forcing $P_0$ canonically factors into a product
  \[ P_0 \cong P_0' \times P''_0 \]
  with
  \[ P'_0 =\{p' | \exists (\delta_n)_{n < k} (\forall n < k : \delta_n \in
     [\aleph_n, \aleph_{n + 1}) \wedge p' : \bigcup_{n < k} [\aleph_n,
     \delta_n) \rightarrow 2)\} \]
  and
  \begin{eqnarray*}
   P''_0 =\{p'' & | \exists (\delta_n)_{k \leqslant n < \omega} (& \forall n \in
     [k, \omega) : \delta_n \in [\aleph_n, \aleph_{n + 1}) \wedge \\
     & & p'':\bigcup_{k \leqslant n < \omega} [\aleph_n, \delta_n) \rightarrow 2)\}.
  \end{eqnarray*}
  
  Let $G_0'$ and $G_0''$ the projections of $G_0$ to $P'_0$ and $P_0''$ resp.
  The forcing $P_0''$ is $< \aleph_{k + 1}$-closed and hence preserves the
  power set of $\aleph_k $. This implies that $\aleph_{i + 1}^V = \aleph_{i +
  1}^{V [G_0'']}$ and $V [G''_0] \models 2^{\aleph_i} = \aleph_{i + 1}$ for $i
  \leqslant k$.The definition of $P'_0$ evaluated in the generic extension $V
  [G_0'']$ yields the original $P'_0 $. $ V [G''_0] \models 2^{\aleph_i} =
  \aleph_{i + 1}$ for $i < k$ implies that $V [G_0''] \models \tmop{card}
  (P'_0) \leqslant \aleph_k $. Thus the extension $V [G_0''] [G'_0]$ does not
  collapse $\aleph_{k + 1}^V $. Every subset $x \subseteq \aleph_k^V$, $x \in
  V [G_0]$ has a name $\dot{x} \subseteq \{ \check{\nu} | \nu < \aleph^V_k \}
  \times P'_0 $, $\dot{x} \in V$. Then
  \begin{eqnarray*}
  (2^{\aleph_k})^{V [G_0]} & \leqslant & (\tmop{card} (\mathcal{P}(\{
     \check{\nu} | \nu < \aleph_k \} \times P'_0))^V \leqslant \\
     & \leqslant &(\tmop{card} (
     \mathcal{P} (\aleph_k))^V = (2^{\aleph_k})^V = \aleph_{k + 1}^V .
  \end{eqnarray*}   
  Since $k < \omega$ was arbitrary, $\aleph_{k + 1}^V = \aleph_{k + 1}^{V
  [G_0]}$ and $V [G_0] \models 2^{\aleph_k} = \aleph_{k + 1}$ for $k <
  \omega$. Hence $\aleph_{\omega}^V = \aleph_{\omega}^{V [G_0]}$.
  
  To bound $(2^{\aleph_{\omega}})^{V [G_0]}$ observe that in $V$ and in $V
  [G_0]$
  \[ \aleph_{\omega}^{\aleph_0} \leqslant \aleph_{\omega}^{\aleph_{\omega}} =
     2^{\aleph_{\omega}} \leqslant \prod_{k < \omega} 2^{\aleph_k} \leqslant
     \prod_{k < \omega} \aleph_{k + 1} \leqslant \prod_{k < \omega}
     \aleph_{\omega} = \aleph_{\omega}^{\aleph_0} . \]
  Since $P_0$ is $< \aleph_1$-closed, no new $\omega$-sequences of ordinals
  are added, and
  \[ (2^{\aleph_{\omega}})^{V [G_0]} = (\aleph_{\omega}^{\aleph_0})^{V [G_0]}
     \leqslant (\aleph_{\omega}^{\aleph_0})^V = (2^{\aleph_{\omega}})^V =
     \aleph_{\omega + 1}^V . \]
  By {\tmname{Cantor}}'s theorem this also implies that $\aleph_{\omega +
  1}^V$ is not collapsed, i.e., $\aleph_{\omega + 1}^V = \aleph^{V
  [G_0]}_{\omega + 1}$.
\end{proof}

The forcing employed in the subsequent construction is a kind of finite
support product of $\lambda$ copies of $P_0$ where the factors are eventually
coupled via $P_{\ast} $.

\begin{definition}
  \label{def_of_forcing}Define the forcing $(P, \leqslant_P, \emptyset)$ by:
  
  \noindent
  $\begin{array}{lll}
    P &  = \{(p_{\ast}, (a_i, p_i)_{i < \lambda})  |  \\
    & \exists (\delta_n)_{n <
    \omega} \exists D \in [\lambda]^{< \omega} (\forall n < \omega : \delta_n
    \in [\aleph_n, \aleph_{n + 1}),\\
    &  p_{\ast} : \bigcup_{n < \omega} [\aleph_n, \delta_n)^2 \rightarrow
    2,\\
    &  \forall i \in D : p_i : \bigcup_{n < \omega} [\aleph_n, \delta_n)
    \rightarrow 2 \wedge p_i \neq \emptyset,\\
    &  \forall i \in D : a_i \in [\aleph_{\omega} \setminus \aleph_0]^{<
    \omega} \wedge \forall n < \omega : \tmop{card} (a_i \cap [\aleph_n,
    \aleph_{n + 1})) \leqslant 1,\\
    &  \forall i \nin D (a_i = p_i = \emptyset))\}.
  \end{array}$
  
  If $p = (p_{\ast}, (a_i, p_i)_{i < \lambda}) \in P$ then $p \in P_{\ast}$
  and $p_i \in P_0$, with all but finitely many $p_i$ being $\emptyset$.
  Extending $p_i$ is controlled by {\tmem{linking ordinals}} $\xi \in a_i $.
  More specifically extending $p_i$ in the interval $[\aleph_n, \aleph_{n +
  1})$ is controlled by $\xi \in a_i \cap [\aleph_n, \aleph_{n + 1})$ if that
  intersection is nonempty. Let $\tmop{supp} (p) =\{i < \lambda |p_i \neq
  \emptyset\}$ be the {\tmem{support}} of $p = (p_{\ast}, (a_i, p_i)_{i <
  \lambda})$, i.e., the set $D$ in the definition of $P$. $P$ is partially
  ordered by
  \[ p' = (p_{\ast}', (a'_i, p'_i)_{i < \lambda}) \leqslant_P (p_{\ast}, (a_i,
     p_i)_{i < \lambda}) = p \]
  iff
  \begin{enumeratealpha}
    \item $p_{\ast}' \supseteq p_{\ast}, \forall i < \lambda (a'_i \supseteq
    a_i \wedge p'_i \supseteq p_i),$
    
    \item {\tmem{(Linking property)}}\\ $\forall i < \lambda \forall n < \omega
    \forall \xi \in a_i \cap [\aleph_n, \aleph_{n + 1}) \forall \zeta \in
    \tmop{dom} (p_i' \setminus p_i) \cap [\aleph_n, \aleph_{n + 1}) :$
    \[ p'_i
    (\zeta) = p_{\ast}' (\xi) (\zeta),\]
    
    \item {\tmem{(Independence property)}}\\ $\forall j \in \tmop{supp} (p) :
    (a'_j \setminus a_j) \cap \bigcup_{i \in \tmop{supp} (p), i \neq j} a'_i =
    \emptyset$.
  \end{enumeratealpha}
  $1 = (\emptyset, (\emptyset, \emptyset)_{i < \lambda})$ is the maximal
  element of $P$.
\end{definition}

One may picture a condition $p \in P$ as

\noindent\includegraphics[width=\textwidth]{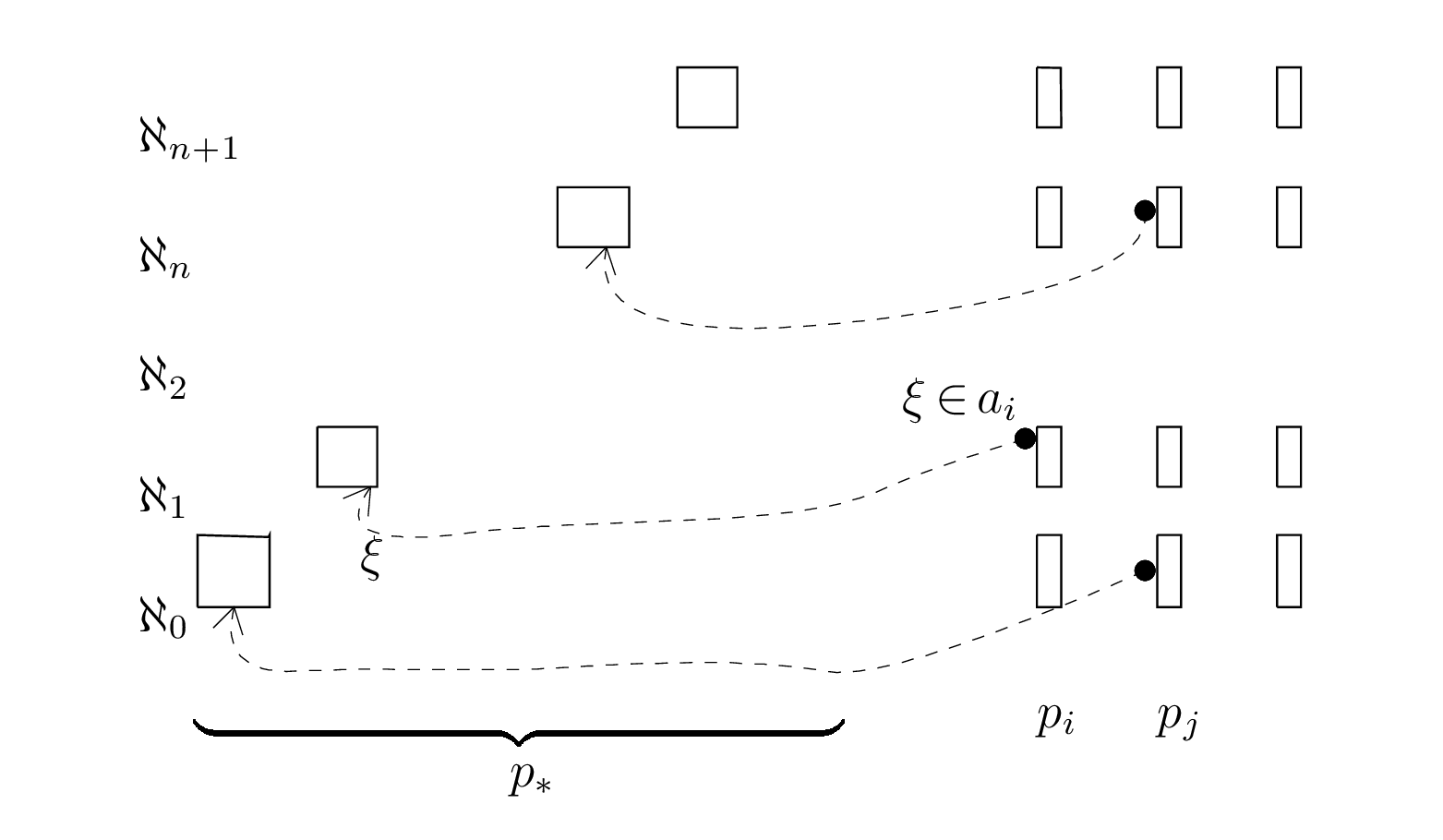}

and an extension $(p_{\ast}', (a'_i, p'_i)_{i < \lambda}) \leqslant_P
(p_{\ast}, (a_i, p_i)_{i < \lambda})$ as

\noindent\includegraphics[width=\textwidth]{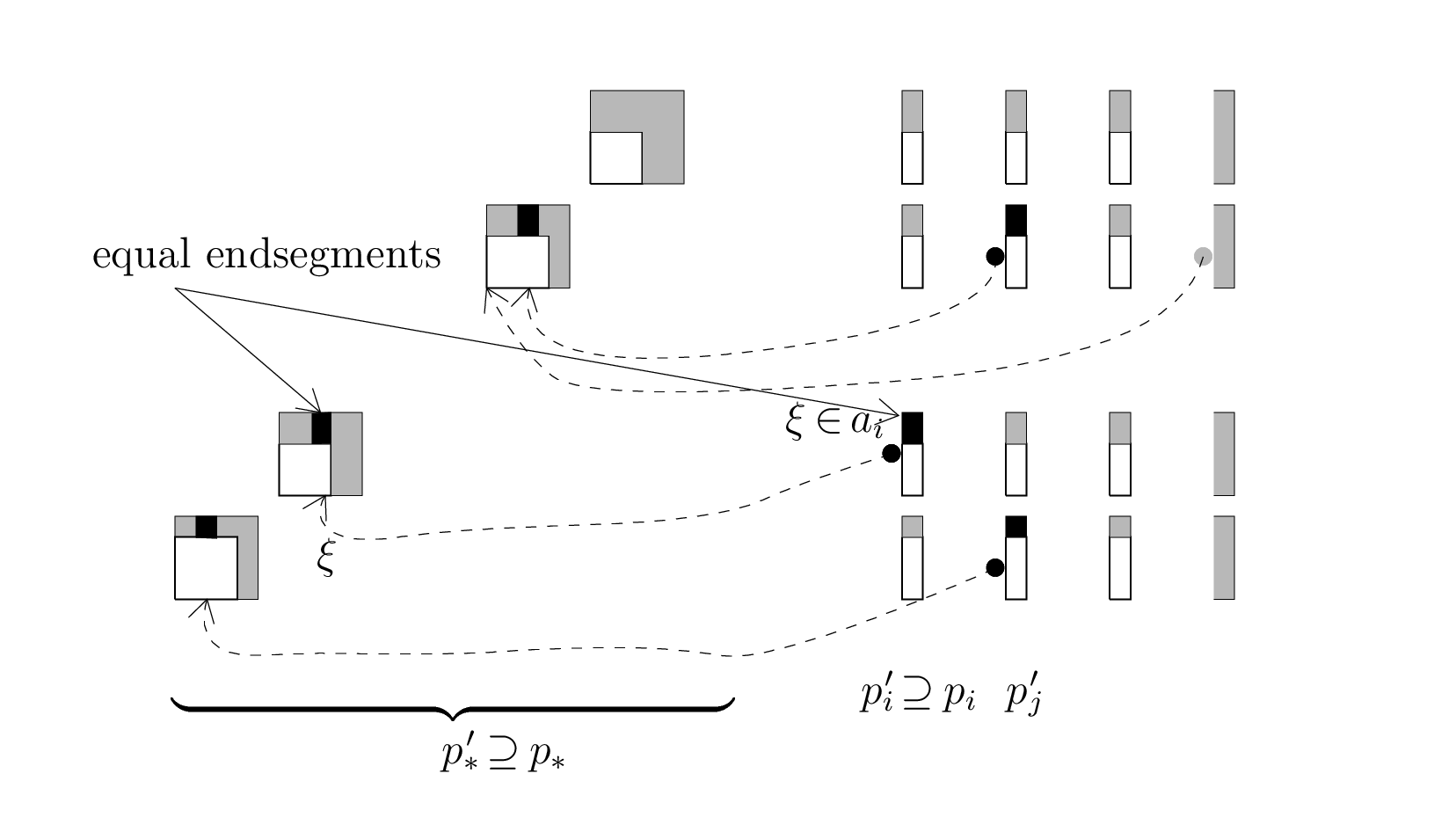}

The gray areas indicate new $0$-$1$-values in the extension $p' \leqslant_P
p$, and the black areas indicate equality of new values forced by linking
ordinals $\xi$.

Let $G$ be a $V$-generic filter for $P$. Several generic objects can be
extracted from $G$. It is easy to see that the set
\[ G_{\ast} =\{p_{\ast} \in P_{\ast} | (p_{\ast}, (a_i, p_i)_{i < \lambda})
   \in G\} \]
is $V$-generic for the partial order $P_{\ast} $. Set \[A_{\ast} = \bigcup
G_{\ast} : \bigcup_{n < \omega} [\aleph_n, \aleph_{n + 1})^2 \rightarrow 2.\]
For $\xi \in [\aleph_n, \aleph_{n + 1})$ let
\[ A_{\ast} (\xi) =\{(\zeta, A_{\ast} (\xi, \zeta)) | \zeta \in [\aleph_n,
   \aleph_{n + 1})\}: [\aleph_n, \aleph_{n + 1}) \rightarrow 2 \]
be the (characteristic function of the) $\xi$-th new {\tmname{Cohen}} subset
of $\aleph_{n + 1}$ in the generic extension.

For $i < \lambda$ let
\[ A_i = \bigcup \{p_i | (p_{\ast}, (a_j, p_j)_{j < \lambda}) \in G\}:
   [\aleph_0, \aleph_{\omega}) \rightarrow 2 \]
be the (characteristic function of the) $i$-th subset of $\aleph_{\omega}$
adjoined by the forcing $P$. $A_i$ is $V$-generic for the forcing $P_0 $.

By the linking property $b)$ of Definition \ref{def_of_forcing}, on a final
segment, the characteristic functions $A_i \upharpoonright [\aleph_n,
\aleph_{n + 1})$ will be equal to some $A_{\ast} (\xi)$. The independence
property $c)$ ensures that sets $A_i, A_j \subseteq \aleph_{\omega}$ with $i
\neq j$ correspond to eventually disjoint, ``parallel'' paths through the
forcing $P_{\ast}$.

The generic filter and the extracted generic objects may be pictured as
follows. Black colour indicates agreement between parts of the $A_i$ and of
$A_{\ast} $; for each $i < \lambda$, some endsegment of $A_i \cap \aleph_{n +
1}$ occurs as an endsegment of some vertical cut in $A_{\ast} \cap \aleph_{n +
1}^2 $.

\noindent\includegraphics[width=\textwidth]{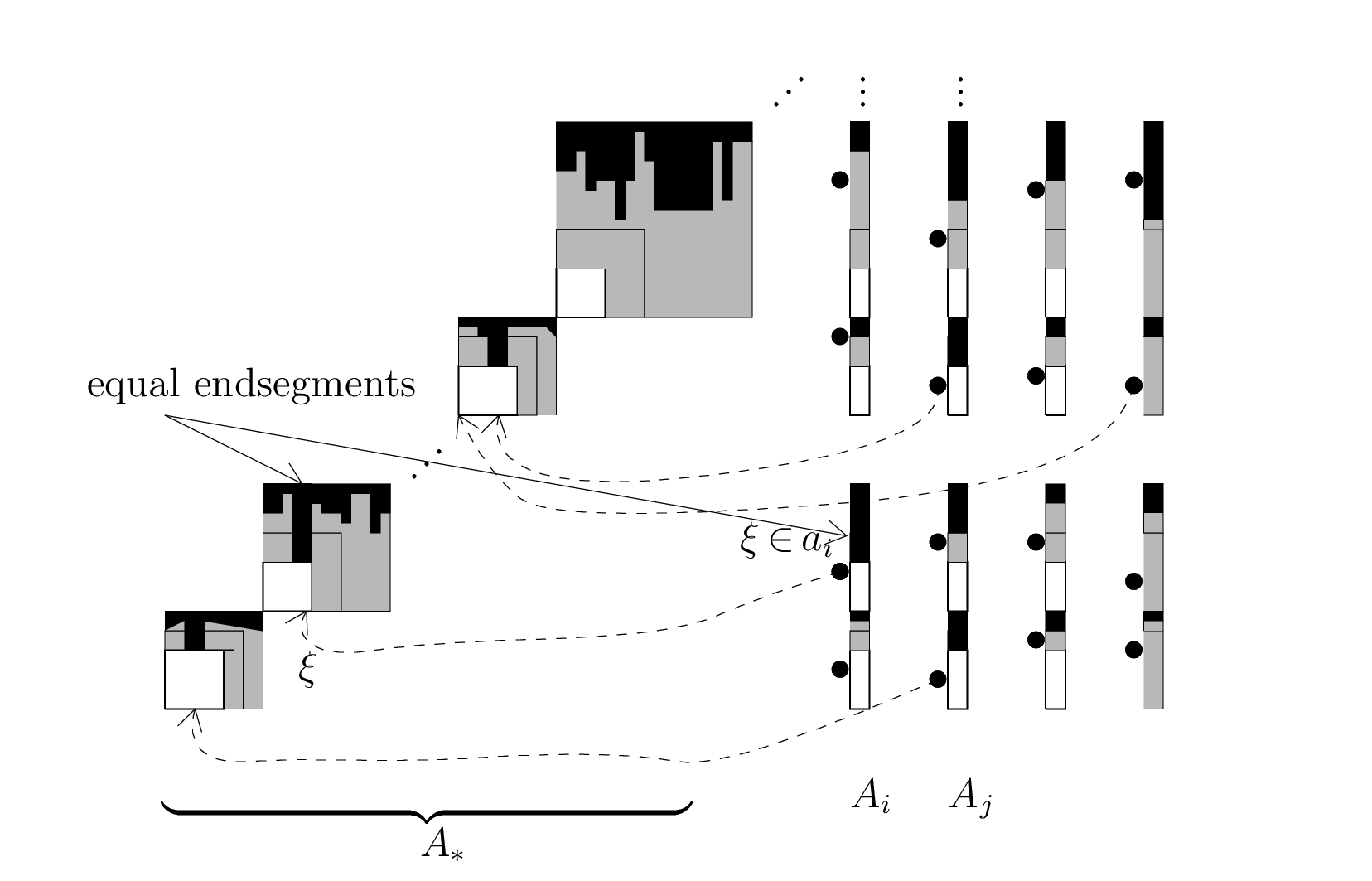}

\begin{lemma}
  \label{chain_condition}$P$ satisfies the $\aleph_{\omega + 2}$-chain
  condition.
\end{lemma}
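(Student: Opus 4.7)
The plan is a three-stage $\Delta$-system refinement of an arbitrary $\aleph_{\omega+2}$-family of conditions, followed by an explicit coordinatewise union as common extension. Let $p^\alpha = (p_*^\alpha, (a_i^\alpha, p_i^\alpha)_{i<\lambda})$ for $\alpha < \aleph_{\omega+2}$, with support $D^\alpha$. The key size bound $|P_\ast| \leqslant \aleph_{\omega+1}$ comes from $\tmop{GCH}$: there are $\aleph_\omega^{\aleph_0} = \aleph_{\omega+1}$ admissible sequences $(\delta_n)_{n<\omega}$, and for each one at most $2^{\aleph_\omega} = \aleph_{\omega+1}$ functions into $2$ on the corresponding domain $\bigcup_n [\aleph_n, \delta_n)^2$. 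Pigeonhole on the regular cardinal $\aleph_{\omega+2}$ thins the family to one on which $p_*^\alpha$ (and hence the sequence $(\delta_n)$) is constantly some $p_\ast$. Next, the standard $\Delta$-system lemma applied to the finite sets $\{D^\alpha\}$ produces a further $\aleph_{\omega+2}$-subfamily whose supports form a $\Delta$-system with root $D$ and constant size $k$. A third pigeonhole, using that the number of root tuples $(a_i, p_i)_{i \in D}$ is at most $\aleph_{\omega+1}^k = \aleph_{\omega+1}$, yields a final subfamily, still of size $\aleph_{\omega+2}$, on which the root data $(a_i, p_i)_{i \in D}$ also agree.

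For any two surviving $p^\alpha, p^\beta$ I would take the coordinatewise union $r$: set $r_\ast := p_\ast$ and, for each $i<\lambda$, $a_i^r := a_i^\alpha \cup a_i^\beta$ and $p_i^r := p_i^\alpha \cup p_i^\beta$. The $\Delta$-system decomposition $D^\alpha = D \sqcup E^\alpha$, $D^\beta = D \sqcup E^\beta$ with $E^\alpha \cap E^\beta = \emptyset$, together with the equality of the root data, implies that at every $i$ one of the two contributions either agrees with the other (on $D$) or is empty (everywhere else). Hence $r$ is a well-formed member of $P$: the at-most-one-per-interval constraint on $a_i^r$, the nonemptiness of $p_i^r$ on the combined support $D^\alpha \cup D^\beta$, and the common skeleton $(\delta_n)$ are all inherited from the two summands.

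It remains to verify $r \leqslant_P p^\alpha$; the case of $p^\beta$ is symmetric. This is exactly where the $\Delta$-system refinement pays off: clauses (b) and (c) of Definition \ref{def_of_forcing} are quantified over $i \in \tmop{supp}(p^\alpha) = D^\alpha$, and at every such $i$ the $\beta$-contribution is either equal to the $\alpha$-contribution (for $i \in D$) or empty (for $i \in E^\alpha$), so $a_i^r = a_i^\alpha$ and $p_i^r = p_i^\alpha$. Consequently the set $a_i^r \setminus a_i^\alpha$ of "new linking ordinals" and the set $\tmop{dom}(p_i^r \setminus p_i^\alpha)$ of new domain points are both empty, and the linking and independence clauses become vacuous, while clause (a) is immediate. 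The one obstacle I anticipate is the worry that the fresh $E^\beta$-coordinates introduced into $r$ might interfere with the ordering at $p^\alpha$; this worry is defused precisely by the support restriction in (b) and (c), which is what makes the $\Delta$-system the correct refinement for this forcing.
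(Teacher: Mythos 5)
Your argument is correct and follows essentially the same route as the paper: bound $\lvert P_\ast\rvert$ by $\aleph_{\omega+1}$ via GCH, thin to a constant $\ast$-component, apply the $\Delta$-system lemma to the finite supports, thin again to constant root data, and show that the coordinatewise union of two surviving conditions is a common extension (the paper states the union without spelling out the verification that it lies below both, which you supply). One small imprecision: clauses (b) and (c) are not literally quantified only over $i \in \tmop{supp}(p^\alpha)$ — (b) ranges over all $i<\lambda$ — but your conclusion survives because for $i \notin \tmop{supp}(p^\alpha)$ we have $a_i^\alpha=\emptyset$, so the linking clause is vacuous there as well; you may want to say this explicitly rather than appealing to a ``support restriction'' in (b).
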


\begin{proof}
  Let $\{(p_{\ast}^j, (a^j_i, p^j_i)_{i < \lambda}) |j < \aleph_{\omega + 2}
  \} \subseteq P$. We shall show that two elements of the sequence are
  compatible. Since
  \[ \tmop{card} (P_{\ast}) = \tmop{card} (P_0) \leqslant 2^{\aleph_{\omega}}
     = \aleph_{\omega + 1} \]
  we may assume that there is $p_{\ast} \in P_{\ast}$ such that $\forall j <
  \aleph_{\omega + 2} : p_{\ast}^j = p_{\ast}$. We may assume that the
  supports $\tmop{supp} ((p_{\ast}, (a^j_i, p^j_i)_{i < \lambda})) \subseteq
  \lambda$ form a $\Delta$-system with a finite kernel $I \subseteq \lambda$.
  Finally we may assume that there are $(a_i, p_i)_{i \in I}$ such that
  $\forall j < \aleph_{\omega + 2} \forall i \in I : (a^j_i, p^j_i) = (a_i,
  p_i)$. Then $(p_{\ast}^0, (a^0_i, p^0_i)_{i < \lambda})$ and $(p_{\ast}^{},
  (a^1_i, p^1_i)_{i < \lambda})$ are compatible since
  \[ (p_{\ast}, (a^0_i \cup a^1_i, p^0_i \cup p_i^1)_{i < \lambda})
     \leqslant_P (p_{\ast}^0, (a^0_i, p^0_i)_{i < \lambda}) \]
  and
  \[ (p_{\ast}, (a^0_i \cup a^1_i, p^0_i \cup p_i^1)_{i < \lambda})
     \leqslant_P (p_{\ast}^1, (a^1_i, p^1_i)_{i < \lambda}) . \]
\end{proof}

By Lemma \ref{chain_condition}, cardinals $\geqslant \aleph_{\omega + 2}^V$
are absolute between $V$ and $V [G]$.

\section{Fuzzifying the $A_i$}

We want to construct a model which contains all the $A_i$ and a map which maps
every $A_i$ to its index $i$. An {\tmem{injective}} map $\lambda
\rightarrowtail \mathcal{P}(\aleph_{\omega})$ for some high $\lambda$ would
imply large consistency strength (see {\cite{ApterKoepke}}). To disallow such
maps, the $A_i$ are replaced by their equivalence classes modulo an
appropriate equivalence relation.

The {\tmem{exclusive or}} function $\oplus : 2 \times 2 \rightarrow 2$ is
defined by
\[ a \oplus b = 0 \text{ iff } a = b. \]
Obviously, $(a \oplus b) \oplus (b \oplus c) = a \oplus c$. For functions
\[ A, A' : \tmop{dom} (A) = \tmop{dom} (A') \rightarrow 2 \]
define the {\tmem{pointwise exclusive or}} $A \oplus A' : \tmop{dom} (A)
\rightarrow 2$ by
\[ (A \oplus A') (\xi) = A (\xi) \oplus A' (\xi) . \]
For functions $A, A' : (\aleph_{\omega} \setminus \aleph_0) \rightarrow 2$
define an equivalence relation $\sim$ by
$A \sim A'$ iff  
\[\exists n < \omega ((A \oplus A') \upharpoonright
   \aleph_{n + 1} \in V [G_{\ast}] \wedge (A \oplus A') \upharpoonright
   [\aleph_{n + 1}, \aleph_{\omega}) \in V) . \]
This relation is clearly reflexive and symmetric. We show transitivity.
Consider $A \sim A' \sim A''$. Choose $n < \omega$ such that
\[ (A \oplus A') \upharpoonright \aleph_{n + 1} \in V [G_{\ast}] \wedge (A
   \oplus A') \upharpoonright [\aleph_{n + 1}, \aleph_{\omega}) \in V \]
and
\[ (A' \oplus A'') \upharpoonright \aleph_{n + 1} \in V [G_{\ast}] \wedge (A'
   \oplus A'') \upharpoonright [\aleph_{n + 1}, \aleph_{\omega}) \in V . \]
Then
\[ (A \oplus A'') \upharpoonright \aleph_{n + 1} = ((A \oplus A')
   \upharpoonright \aleph_{n + 1} \oplus (A' \oplus A'') \upharpoonright
   \aleph_{n + 1}) \in V [G_{\ast}] \]
and
\[\begin{array}{l}
(A \oplus A'') \upharpoonright [\aleph_{n + 1}, \aleph_{\omega}) =\\
= ((A
   \oplus A') \upharpoonright [\aleph_{n + 1}, \aleph_{\omega}) \oplus (A'
   \oplus A'') \upharpoonright [\aleph_{n + 1}, \aleph_{\omega})) \in V .
   \end{array}\]
Hence $A \sim A''$.

For $A : (\aleph_{\omega} \setminus \aleph_0) \rightarrow 2$ define the
$\sim$-equivalence class \[\tilde{A} =\{A' |A' \sim A\}.\]

\section{The symmetric submodel}

Our final model will be a model generated by the following parameters and
their constituents
\begin{itemizeminus}
  \item $T_{\ast} =\mathcal{P}(< \kappa)^{V [A_{\ast}]}$, setting $\kappa =
  \aleph_{\omega}^V$;
  
  \item $\vec{A} = ( \tilde{A}_i | i < \lambda)$.
\end{itemizeminus}
The model
\[ N = \tmop{HOD}^{V [G]} (V \cup \{T_{\ast}, \vec{A} \} \cup T_{\ast} \cup
   \bigcup_{i < \lambda} \tilde{A}_i) \]
consists of all sets which, in $V [G]$ are hereditarily definable from
parameters in the transitive closure of $V \cup \{T_{\ast}, \vec{A} \}$. This
model is {\tmem{symmetric}} in the sense that it is generated from parameters
which are invariant under certain (partial) isomorphisms of the forcing $P$.

\begin{lemma}
  $N$ is a model of $\tmop{ZF}$, and there is a surjection $f :
  \mathcal{P}(\kappa) \rightarrow \lambda$ in $N$.
\end{lemma}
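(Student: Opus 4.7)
The plan is to treat the two claims separately. For $N \models \tmop{ZF}$ I would invoke the standard theorem that whenever $X$ is a set in a model $M$ of $\tmop{ZF}$, the class $\tmop{HOD}^M(X)$ of sets of $M$ hereditarily definable from parameters in the transitive closure of $\{X\}$ is a transitive inner model of $\tmop{ZF}$. Our generator set is a set of $V[G]$, so the usual verification---foundation and well-foundedness from transitivity, extensionality and pairing trivially, infinity because $\omega \in V \subseteq N$, separation and replacement via reflection and a definability predicate, power set because the $N$-power set of a given $x \in N$ is carved out by $\tmop{HOD}$-definability from the $V[G]$-power set of $x$---goes through verbatim. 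No genuinely new work is required.

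For the surjection, the natural candidate is the map
\[ f(A) = \begin{cases} i & \text{if there is (a unique) } i < \lambda \text{ with } A \in \tilde A_i, \\ 0 & \text{otherwise.} \end{cases} \]
This is definable in $V[G]$ from the parameter $\vec A$, which lies in $N$, so $f \in N$. By reflexivity of $\sim$ we have $A_i \in \tilde A_i \subseteq \bigcup_{j < \lambda} \tilde A_j$, and the latter union is a generator of $N$, so $A_i \in N$. Thus, once $f$ is shown to be well-defined, it automatically satisfies $f(A_i) = i$ and is surjective onto $\lambda$. Well-definedness reduces to the combinatorial claim that the equivalence classes $\tilde A_i$ are pairwise disjoint, i.e.\ $A_i \not\sim A_j$ whenever $i \neq j$.

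The heart of the proof is therefore this disjointness claim. Unpacking the definition of $\sim$, it suffices to show the stronger statement that for every $n < \omega$ the tail $(A_i \oplus A_j) \upharpoonright [\aleph_{n+1}, \aleph_\omega)$ does not lie in $V$. I would argue by genericity: properties (b) and (c) of Definition~\ref{def_of_forcing} jointly force that on a final segment of $m < \omega$ the restrictions $A_i \upharpoonright [\aleph_m, \aleph_{m+1})$ and $A_j \upharpoonright [\aleph_m, \aleph_{m+1})$ coincide with two distinct sections $A_\ast(\xi_m^i)$ and $A_\ast(\xi_m^j)$ of the $P_\ast$-generic. Given any tentative candidate $B \in V$ for such a tail and any condition $p \in P$, I would find $m$ large and $\eta \in [\aleph_m, \aleph_{m+1})$ outside the current domain of $p_\ast$, and extend $p$ to decide $A_\ast(\xi_m^i, \eta) \oplus A_\ast(\xi_m^j, \eta) \neq B(\eta)$, thereby refuting $(A_i \oplus A_j) \upharpoonright [\aleph_{n+1}, \aleph_\omega) = B$. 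A density-across-all-$B\in V$ argument then yields the claim.

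The main obstacle will be carrying out this density step while respecting the linking and independence constraints simultaneously at every active support index of $p$: one must extend $p_\ast$ at the coordinates $(\xi_m^i, \eta)$ and $(\xi_m^j, \eta)$ freely, which is only legal if those coordinates are not yet pinned down by some other linking ordinal and if $\xi_m^i \neq \xi_m^j$. The independence property delivers precisely this freedom on a final segment of $m$, so the argument closes; the bookkeeping needed to verify independence of the linking ordinals across cofinally many levels, and to transport the local decisions at $P_\ast$ through the linking property to the tails of $A_i$ and $A_j$, is the only substantive technical work.
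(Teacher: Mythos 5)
Your proof is correct and follows essentially the paper's approach: a density argument shows $A_i \nsim A_j$ for $i \neq j$, and then the map $f$ sending each member of $\tilde A_i$ to $i$ (and everything else to $0$) is a well-defined surjection definable in $N$ from $\vec A$. The paper's density argument is more economical than your proposed route through $P_\ast$-sections: given $v = (A_i \oplus A_j) \upharpoonright [\aleph_{n+1}, \aleph_\omega) \in V$ and a condition $p$, it uses that $a_i, a_j$ are \emph{finite} to pick a level $m > n$ containing no linking ordinal of $a_i$ or $a_j$, and there extends $p_i, p_j$ directly at a fresh $\eta$ to force $p_i(\eta) \oplus p_j(\eta) \neq v(\eta)$, filling in $p_\ast$ and the remaining $p_k$ consistently. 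Your version can be patched to work, but beware the small imprecision: the independence property does \emph{not} guarantee $\xi_m^i \neq \xi_m^j$ for an arbitrary given condition (it only constrains newly added linking ordinals), so you would first have to add fresh, necessarily distinct, linking ordinals at an unoccupied level before manipulating $p_\ast$ --- at which point the direct extension of $p_i, p_j$ is already available and makes the detour through $P_\ast$ superfluous.
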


\begin{proof}
  Note that for every $i < \lambda$: $A_i \in N$.\\
  (1) Let $i < j < \lambda$. Then $A_i \nsim A_j $.\\
  {\tmem{Proof}}. Assume instead that $A_i \sim A_j $. Then take $n < \omega$
  such that $v = (A_i \oplus A_j) \upharpoonright [\aleph_{n + 1},
  \aleph_{\omega}) \in V$. The set
  
  $\begin{array}{ll}
   D =\{ & (p_{\ast}, (a_k, p_k)_{k < \lambda}) | \exists \xi \in [\aleph_{n +
     1}, \aleph_{\omega})\\
     & (\xi \in \tmop{dom} (p_i) \cap \tmop{dom} (p_j)
     \wedge v (\xi) \neq p_i (\xi) \oplus p_j (\xi))\} \in V
  \end{array}$
     
  \noindent is readily seen to be dense in $P$. Take $(p_{\ast}, (a_k, p_k)_{k <
  \lambda}) \in D \cap G$. Take $\xi \in [\aleph_{n + 1}, \aleph_{\omega})$
  such that
  \[ \xi \in \tmop{dom} (p_i) \cap \tmop{dom} (p_j) \wedge v (\xi) \neq p_i
     (\xi) \oplus p_j (\xi)) . \]
  Since $p_i \subseteq A_i$ and $p_j \subseteq A_j$ we have $v (\xi) \neq A_i
  (\xi) \oplus A_j (\xi)$ and $v \neq (A_i \oplus A_j) \upharpoonright
  [\aleph_{n + 1}, \aleph_{\omega})$. Contradiction. {\tmem{qed}}(1)
  
  Thus
  \[ f (z) = \left\{ \begin{array}{l}
       i \text{, if } z \in \tilde{A}_i ;\\
       0 \text{, else;}
     \end{array} \right. \]
  is a well-defined surjection $f : \mathcal{P}(\kappa) \rightarrow \lambda$,
  and $f$ is definable in $N$ from the parameters $\kappa$ and $\vec{A}$.
\end{proof}

The main theorem will be established by showing that, in $N$, the situation
below $\kappa$ is largely as in $V$, in particular $\kappa = \aleph_{\omega}^N
$. This requires an analysis of sets of ordinals in $N$.

\begin{lemma}
  \label{presentation}Every set $X \in N$ is definable in $V [G]$ in the
  following form: there are an $\in$-formula $\varphi$, $x \in V$, $n <
  \omega$, and $i_0, \ldots, i_{l - 1} < \lambda$ such that
  \[ X =\{u \in V [G] | V [G] \models \varphi (u, x, T_{\ast}, \vec{A},
     A_{\ast} \upharpoonright (\aleph^V_{n + 1})^2, A_{i_0}, \ldots, A_{i_{l -
     1}})\}. \]
\end{lemma}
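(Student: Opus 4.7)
The plan is to unfold the HOD definition of $N$ and show that each ``raw'' parameter it permits can be rewritten in terms of $x\in V$, $T_*$, $\vec A$, $A_* \upharpoonright (\aleph_{n+1}^V)^2$, and finitely many $A_{i_j}$. Since $X \in N$, there exist an $\in$-formula $\psi$, ordinals, and finitely many parameters $p_1,\dots,p_r$ drawn from the generator set $S = V \cup \{T_*, \vec A\} \cup T_* \cup \bigcup_{i<\lambda}\tilde A_i$ such that
\[
X = \{u \in V[G] \mid V[G] \models \psi(u, p_1,\dots,p_r)\}.
\]
I would classify each $p_s$ by its source. Ordinals and elements of $V$ are simply absorbed into $x$, and $T_*$ and $\vec A$ already appear on the allowed list, so the substantive cases are (a) $p_s \in T_*$ and (b) $p_s \in \tilde A_{i_s}$ for some $i_s<\lambda$.

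For case (a), I would replay the product decomposition argument of Lemma \ref{preservation} with $P_*$ in place of $P_0$: cutting at level $m$ factors $P_* \cong P_*' \times P_*''$, with $P_*''$ being $<\!\aleph_{m+1}$-closed and hence preserving $\mathcal P(\aleph_m^V)$. Every $y \in T_*$ is a bounded subset of $\kappa$, so $y \subseteq \aleph_m^V$ for some $m$, and the factorization places $y \in V[A_* \upharpoonright (\aleph_{m+1}^V)^2]$. Choosing a $V$-name $\dot y$ that witnesses this, we recover $y$ in $V[G]$ as a term in $\dot y$ and $A_* \upharpoonright (\aleph_{m+1}^V)^2$.

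Case (b) reduces to case (a) via the definition of $\sim$. From $p_s \sim A_{i_s}$ we get some $m<\omega$ with $(p_s \oplus A_{i_s})\upharpoonright \aleph_{m+1} \in V[G_*]$ and $(p_s \oplus A_{i_s})\upharpoonright [\aleph_{m+1},\aleph_\omega) \in V$. The first piece is a bounded subset of $\kappa$ in $V[A_*]$, so it is handled exactly as in (a); the second piece is itself a $V$-parameter. Since $p_s = A_{i_s} \oplus (p_s \oplus A_{i_s})$, the function $p_s$ is thereby $V[G]$-definable from $A_{i_s}$, $A_* \upharpoonright (\aleph_{m+1}^V)^2$, and $V$-parameters.

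To conclude I would consolidate: only finitely many $p_s$ appear in $\psi$, so let $n$ be the maximum of the $m$'s arising from cases (a) and (b), let $\{i_0,\dots,i_{l-1}\}$ be the finite set of $\lambda$-indices that show up in case (b), and bundle every ordinal and every $V$-name produced along the way into a single $x \in V$. Substituting the replacements into $\psi$ yields the desired $\in$-formula $\varphi$. The main obstacle is case (a): carefully checking that the product decomposition of Lemma \ref{preservation} goes through for the two-dimensional forcing $P_*$ so that every element of $T_*$ genuinely descends into a bounded initial extension $V[A_* \upharpoonright (\aleph_{m+1}^V)^2]$. Once this is established, case (b) is essentially a direct reading of the definition of $\sim$, and the remaining bookkeeping is routine.
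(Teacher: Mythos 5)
Your proposal is correct and follows essentially the same route as the paper: unfold the $\operatorname{HOD}$ definition, then reduce parameters from $T_{\ast}$ by the product analysis of $P_{\ast}$ (placing bounded $V[A_{\ast}]$-sets into some $V[A_{\ast}\upharpoonright(\aleph_{n+1}^V)^2]$), and reduce parameters from $\tilde A_i$ by splitting $p_s\oplus A_i$ at some $\aleph_{m+1}$ and recovering $p_s$ from $A_i$, the bounded $V[A_{\ast}]$-piece, and the $V$-piece. The final consolidation step of taking a single large enough $n$ and bundling $V$-parameters into $x$ matches the paper's closing remark.
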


\begin{proof}
  By the original definition, every set in $N$ is definable in $V [G]$ from
  finitely many parameters in
  \[ V \cup \{T_{\ast}, \vec{A} \} \cup T_{\ast} \cup \bigcup_{i < \lambda}
     \tilde{A}_i . \]
  To reduce the class of defining parameters to
  \[ V \cup \{T_{\ast}, \vec{A} \} \cup \{A_{\ast} \upharpoonright
     (\aleph^V_{n + 1})^2 | n < \omega\} \cup \{A_i |i < \lambda\} \]
  observe:
  \begin{itemizeminus}
    \item Let $x \in T_{\ast}$ be a bounded subset of $\aleph_{\omega}^V$ with
    $x \in V [A_{\ast}]$. A standard product analysis of the generic extension
    $V [G_{\ast}] = V [A_{\ast}]$ of $V$ yields that $x \in V [A_{\ast}
    \upharpoonright (\aleph_{n + 1}^V)^2]$ for some $n < \omega$.
    
    \item Let $y \in \tilde{A}_i $. Then $y \sim A_i $, i.e.,
    \[ (y \oplus A_i) \upharpoonright \aleph_{m + 1} \in V [G_{\ast}] \wedge
       (y \oplus A_i) \upharpoonright [\aleph_{m + 1}, \aleph_{\omega}) \in V
    \]
    for some $m < \omega$. Let $z = (y \oplus A_i) \upharpoonright \aleph_{m +
    1} \in V [A_{\ast}]$. By the previous argument $z \in V [A_{\ast}
    \upharpoonright (\aleph_{n + 1}^V)^2]$ for some $n < \omega$. Let $z' = (y
    \oplus A_i) \upharpoonright [\aleph_{m + 1}, \aleph_{\omega}) \in V$. Then
    \begin{eqnarray*}
      y & = & (y \upharpoonright \aleph_{m + 1}) \cup (y \upharpoonright
      [\aleph_{m + 1}, \aleph_{\omega}))\\
      & = & ((z \oplus A_i) \upharpoonright \aleph_{m + 1}) \cup ((z' \oplus
      A_i) \upharpoonright [\aleph_{m + 1}, \aleph_{\omega}))\\
      & \in & V [A_{\ast} \upharpoonright (\aleph_{n + 1}^V)^2, A_i] .
    \end{eqnarray*}
  \end{itemizeminus}
  Finitely many parameters of the form $A_{\ast} \upharpoonright (\aleph_{n +
  1}^V)^2$ can then be incorporated into a single such parameter taking a
  sufficiently high $n < \omega$.
\end{proof}

\section{Approximating $N$}

Concerning sets of ordinals, the model $N$ can be approximated by ``mild''
generic extensions of the ground model. Note that many set theoretic notions
only refer to ordinals and sets of ordinals.

\begin{lemma}
  \label{approximation}Let $X \in N$ and $X \subseteq \tmop{Ord}$. Then there
  are $n < \omega$ and $i_0, \ldots, i_{l - 1} < \lambda$ such that
  \[ X \in V [A_{\ast} \upharpoonright (\aleph^V_{n + 1})^2, A_{i_0}, \ldots,
     A_{i_{l - 1}}] . \]
\end{lemma}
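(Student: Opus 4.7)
The plan is to invoke Lemma \ref{presentation} to present $X$ as the extension of a formula $\varphi$ with parameters, and then to eliminate the ``non-local'' parameters $T_\ast$ and $\vec{A}$ by a homogeneity argument on a suitable quotient forcing. Concretely, Lemma \ref{presentation} gives $\varphi$, $x \in V$, $n < \omega$ and $i_0, \ldots, i_{l-1} < \lambda$ with
\[ X = \{\alpha \in \mathrm{Ord} : V[G] \models \varphi(\alpha, x, T_\ast, \vec{A}, A_\ast \upharpoonright (\aleph^V_{n+1})^2, A_{i_0}, \ldots, A_{i_{l-1}})\}. \]
Set $M = V[A_\ast \upharpoonright (\aleph^V_{n+1})^2, A_{i_0}, \ldots, A_{i_{l-1}}]$; the target is to show $X \in M$.

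The approach is to factor $P$, below a condition $p_0 \in G$ that forces the displayed definition of $\dot{X}$, as a ``local'' forcing producing $M$ together with a quotient forcing $Q \in M$, so that $V[G] = M[H]$ for a $Q$-generic $H$ over $M$. In $M$, I then exhibit a group $\mathcal{G}$ of automorphisms of $Q$ that fix canonical names for $T_\ast$ and $\vec{A}$. For $T_\ast = \mathcal{P}({<}\kappa)^{V[A_\ast]}$ it suffices that the elements of $\mathcal{G}$ act on the columns $A_\ast(\xi)$, $\xi \geqslant \aleph_{n+1}^V$, by permutations lying in $V$, since such permutations preserve $V[A_\ast]$ as a model and hence preserve its bounded powerset. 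For $\vec{A}$ it suffices that, on each coordinate $i \notin \{i_0, \ldots, i_{l-1}\}$, the action changes $A_i$ only by a function that is in $V[G_\ast]$ on an initial segment $\aleph_{m+1}$ and in $V$ on the tail $[\aleph_{m+1}, \aleph_\omega)$, so that the image of $A_i$ is $\sim$-equivalent to $A_i$ and $\tilde{A}_i$ is fixed. A standard homogeneity argument then finishes: for any two conditions $q_1, q_2 \in Q$ agreeing on the finite coordinates on which $\mathcal{G}$ acts trivially there is $\pi \in \mathcal{G}$ with $\pi(q_1)$ compatible with $q_2$; since $\pi$ fixes the name $\dot{X}$, the statement ``$\check\alpha \in \dot{X}$'' is forced by $q_1$ iff by $q_2$. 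Hence the truth of $\alpha \in X$ is decided in $M$ from the local part of $G$, and $X \in M$.

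The main obstacle will be the careful construction of $\mathcal{G}$ so that its elements respect both the linking property (b) and the independence property (c) of Definition \ref{def_of_forcing}. Any column permutation of $A_\ast$ must be accompanied by the parallel relabelling of the linking ordinals $a_i$ so that every $\xi \in a_i$ still ties $A_i$ to the correct column of $A_\ast$, and the permissible tail modifications of $A_i$ must be chosen compatibly with this relabelling while keeping the $a_i$'s eventually pairwise disjoint. The verification that $\mathcal{G}$ is nevertheless rich enough to align any two quotient conditions sharing the same local data is the heart of the argument; once that density/extension analysis is in place, the homogeneity step above is routine and delivers $X \in M$.
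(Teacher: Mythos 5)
Your plan is conceptually close to the paper's — both proofs turn on the existence of isomorphisms of the forcing that fix the canonical names for $T_\ast$, $\vec{A}$, $A_\ast \upharpoonright (\aleph^V_{n+1})^2$ and the designated $A_{i_j}$ — but two of the structural assumptions you make are genuine gaps that the paper is careful to avoid, and they are not cosmetic.

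First, you assert that $P$ factors below a suitable $p_0 \in G$ as a local forcing producing $M = V[A_\ast \upharpoonright (\aleph^V_{n+1})^2, A_{i_0}, \ldots, A_{i_{l-1}}]$ times a quotient $Q \in M$, with $V[G] = M[H]$ for a $Q$-generic $H$. This is not established anywhere and is far from automatic: the linking property (b) of Definition \ref{def_of_forcing} couples the $\ast$-coordinate to the $i$-coordinates, so $P$ does not decompose into ``local $\times$ rest'' in any straightforward way, and the datum $(A_{i_0} \upharpoonright [\aleph^V_{n+1}, \aleph^V_\omega), \ldots)$ is not read off a coordinate of $P$ in a way that gives a projection. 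Lemma \ref{cardinal_absoluteness} shows $M$ is a generic extension of $V$ by a forcing isomorphic to $P_0$, but by an ad hoc argument, not by exhibiting a projection of $P$. The paper sidesteps the need for any such factorization by defining the candidate approximation $X'$ \emph{inside $M$} via forcing statements ``$p \Vdash \varphi(\check{u},\ldots)$'' over the full poset $P$ in $V$, where the existential quantifier over $p \in P$ is constrained by compatibility with the local data (conditions $p_{\ast}\upharpoonright(\aleph^V_{n+1})^2 \subseteq A_\ast\upharpoonright(\aleph^V_{n+1})^2$, $a_{i_j} \supseteq a^\ast_{i_j}$, $p_{i_j} \subseteq A_{i_j}$). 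This is definable from the parameters generating $M$, so $X' \in M$, with no quotient in sight.

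Second, you ask for a \emph{group} $\mathcal{G}$ of automorphisms of $Q$ fixing the relevant names, together with the usual transitivity property on conditions that agree on the local coordinates. The paper's $\pi$ is not an automorphism: it is an isomorphism between the two cones $P \upharpoonright p$ and $P \upharpoonright p'$, constructed only after $p$ and $p'$ have been further strengthened to have matching ``shape'' (same support, same sizes of the $a_i$'s per interval, equal union of linking ordinals, equal domains — items (10)–(13)). That shape-matching step is essential: the rigid linking and independence constraints make it implausible that a single global automorphism of a quotient could realign two arbitrary conditions, and you would also need $\pi$ to preserve the quotient ordering, an extra burden your sketch doesn't address. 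Weak (cone) homogeneity, which is what the paper actually proves, is both weaker and exactly sufficient: from $p$ forcing $\varphi(\check{u},\ldots)$ and $p' \in G$ forcing $\neg\varphi(\check{u},\ldots)$, one passes to the isomorphic generic extensions $V[H]$ and $V[H']$, verifies ((25)–(32)) that all the parameters appearing in $\varphi$ are evaluated identically, and derives a contradiction. So the high-level symmetry idea in your proposal is right, but the concrete scaffolding you propose — the factorization through a quotient $Q \in M$ and the existence of an automorphism group $\mathcal{G}$ on it — is unproved and, given the rigidity of $P$, probably not the right vehicle. Replacing ``automorphisms of $Q$'' by ``cone isomorphisms of $P$ between shape-matched strengthenings, applied to $V$-generic filters'' is not a detail to fill in later; it is the load-bearing change of framework that makes the paper's argument go through.
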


\begin{proof}
  By Lemma \ref{presentation} take an $\in$-formula $\varphi$, $x \in V$, $n <
  \omega$, \\ and $i_0, \ldots, i_{l - 1} < \lambda$ such that
  \[ X =\{u \in \tmop{Ord} | V [G] \models \varphi (u, x, T_{\ast}, \vec{A},
     A_{\ast} \upharpoonright (\aleph^V_{n + 1})^2, A_{i_0}, \ldots, A_{i_{l -
     1}})\}. \]
  By taking $n$ sufficiently large, we may assume that\\
  $\forall j < k < l \forall m \in [n, \omega) \forall \delta \in [\aleph_m,
     \aleph_{m + 1}) :$ 
     \[A_{i_j} \upharpoonright [\delta, \aleph_{m + 1}) \neq
     A_{i_k} \upharpoonright [\delta, \aleph_{m + 1}) . \]
  For $j < l$ set
  
  $\begin{array}{ll}
   a^{\ast}_{i_j} =\{\xi | \exists m \leqslant n & \exists \delta \in
     [\aleph_m, \aleph_{m + 1}) :\\  & A_{i_j} \upharpoonright [\delta, \aleph_{m +
     1}) = A_{\ast} (\xi) \upharpoonright [\delta, \aleph_{m + 1})\}
     \end{array}$
     
  \noindent where $A_{\ast} (\xi) =\{(\zeta, A_{\ast} (\xi, \zeta)) | (\xi, \zeta) \in
  \tmop{dom} (A_{\ast})\}$. By the properties of $Q$, $a^{\ast}_{i_j}
  \subseteq \aleph_{n + 1}$ is finite and $\forall m \leqslant n : \tmop{card}
  (a^{\ast}_{i_j} \cap [\aleph_m, \aleph_{m + 1})) = 1$.
  
  Now define
  \begin{eqnarray*}
    X' & =\{u \in \tmop{Ord} | & \text{there is } p = (p_{\ast}, (a_i, p_i)_{i
    < \lambda}) \in P \text{ such that}\\
    &  & p_{\ast} \upharpoonright (\aleph^V_{n + 1})^2 \subseteq A_{\ast}
    \upharpoonright (\aleph^V_{n + 1})^2,\\
    &  & a_{i_0} \supseteq a^{\ast}_{i_0}, \ldots, a_{i_{l - 1}} \supseteq
    a^{\ast}_{i_{l - 1}},\\
    &  & p_{i_0} \subseteq A_{i_0}, \ldots, p_{i_{l - 1}} \subseteq A_{i_{l -
    1}}, \text{ and}\\
    &  & p \Vdash \varphi ( \check{u}, \check{x}, \sigma, \tau, \dot{A}
    \upharpoonright ( \check{\aleph}_{n + 1})^2, \dot{A}_{i_0}, \ldots,
    \dot{A}_{i_{l - 1}})\},
  \end{eqnarray*}
  where $\sigma, \tau, \dot{A}, \dot{A}_{i_0}, \ldots, \dot{A}_{i_{l - 1}}$
  are canonical names for \[T_{\ast}, \vec{A}, A_{\ast}, A_{i_0}, \ldots,
  A_{i_{l - 1}}\] resp.
  
  Then $X' \in V [A_{\ast} \upharpoonright (\aleph^V_{n + 1})^2, A_{i_0},
  \ldots, A_{i_{l - 1}}]$.
  
  (1) $X \subseteq X'$.\\
  {\tmem{Proof}}. Consider $u \in X$. Take $p = (p_{\ast}, (a_i, p_i)_{i <
  \lambda}) \in G$ such that
  \[ p \Vdash \varphi ( \check{u}, \check{x}, \sigma, \tau, \dot{A}
     \upharpoonright ( \check{\aleph}_{n + 1})^2, \dot{A}_{i_0}, \ldots,
     \dot{A}_{i_{l - 1}}) . \]
  Then $p_{\ast} \upharpoonright (\aleph^V_{n + 1})^2 \subseteq A_{\ast}
  \upharpoonright (\aleph^V_{n + 1})^2$ and $p_{i_0} \subseteq A_{i_0},
  \ldots, p_{i_{l - 1}} \subseteq A_{i_{l - 1}} $. Using a density argument we
  may also assume that $\tmop{card} (a_{i_0} \cap \aleph_{n + 1}) = \ldots =
  \tmop{card} (a_{i_{l - 1}} \cap \aleph_{n + 1}) = n$. Then $a_{i_0}
  \supseteq a^{\ast}_{i_0}, \ldots, a_{i_{l - 1}} \supseteq a^{\ast}_{i_{l -
  1}} $. Thus $u \in X'$. {\tmem{qed}}(1)
  
  The converse direction, $X' \subseteq X$, is more involved and uses an
  isomorphism argument. Suppose for a contradiction that there were $u \in X'
  \setminus X$. Then take a condition $p = (p_{\ast}, (a_i, p_i)_{i <
  \lambda}) \in P$ as in the definition of $X'$, i.e.,
  
  (2) $p_{\ast} \upharpoonright (\aleph^V_{n + 1})^2 \subseteq A_{\ast}
  \upharpoonright (\aleph^V_{n + 1})^2$,
  
  (3) $a_{i_0} \supseteq a^{\ast}_{i_0}, \ldots, a_{i_{l - 1}} \supseteq
  a^{\ast}_{i_{l - 1}} $,
  
  (4) $p_{i_0} \subseteq A_{i_0}, \ldots, p_{i_{l - 1}} \subseteq A_{i_{l -
  1}} $, and
  
  (5) $p \Vdash \varphi ( \check{u}, \check{x}, \sigma, \tau, \dot{A}
  \upharpoonright ( \check{\aleph}_{n + 1})^2, \dot{A}_{i_0}, \ldots,
  \dot{A}_{i_{l - 1}})\}$.
  
  By $u \nin X$ take $p' = (p_{\ast}', (a'_i, p'_i)_{i < \lambda}) \in G$ such
  that
  
  (6) $p' \Vdash \neg \varphi ( \check{u}, \check{x}, \sigma, \tau, \dot{A}
  \upharpoonright ( \check{\aleph}_{n + 1})^2, \dot{A}_{i_0}, \ldots,
  \dot{A}_{i_{l - 1}})$.
  
  By genericity we may assume that
  
  (7) $p_{\ast}' \upharpoonright (\aleph^V_{n + 1})^2 \subseteq A_{\ast}
  \upharpoonright (\aleph^V_{n + 1})^2$
  
  (8) $a'_{i_0} \supseteq a^{\ast}_{i_0}, \ldots, a'_{i_{l - 1}} \supseteq
  a^{\ast}_{i_{l - 1}} $, and
  
  (9) $p'_{i_0} \subseteq A_{i_0}, \ldots, p'_{i_{l - 1}} \subseteq A_{i_{l -
  1}} $.
  
  By strengthening the conditions we can arrange that $p$ and $p'$ have
  similar ``shapes'' whilst preserving the above conditions (2) to (9):
  
  (10) ensure that $\tmop{supp} (p) = \tmop{supp} (p')$; choose some
  $\aleph_{m + 1}$ such that $\forall i \in \tmop{supp} (p) (a_i \subseteq
  \aleph_{m + 1} \wedge a_i' \subseteq \aleph_{m + 1})$;
  
  (11) extend the $a_i$ and $a'_i$ such that
  \[ \forall i \in \tmop{supp} (p) \forall k \leqslant m : \tmop{card} (a_i
     \cap [\aleph_k, \aleph_{k + 1})) = \tmop{card} (a'_i \cap [\aleph_k,
     \aleph_{k + 1})) = 1 ; \]
  (12) also extend the conditions so that they involve the same ``linking''
  ordinals, possibly at different positions within the conditions:
  \[ \bigcup_{i < \lambda} a_i = \bigcup_{i < \lambda} a_i' \]
  (13) extend the $p_{\ast}$ and $p_i$'s in $p$ and $p'$ resp. so that for
  some sequence $(\delta_k |k < \omega)$:
  \[ \tmop{dom} (p_{\ast}) = \tmop{dom} (p_{\ast}') = \bigcup_{k < \omega}
     [\aleph_k, \delta_k)^2 \bignone \]
  and
  \[ \forall i \in \tmop{supp} (p) : \tmop{dom} (p_i) = \tmop{dom} (p'_i) =
     \bigcup_{k < \omega} [\aleph_k, \delta_k) . \bignone \]
  The following picture tries to capture some aspects of the shape similarity
  between $p$ and $p'$; corresponding components of $p$ and $p'$ are drawn
  side by side
  
  \noindent\includegraphics[width=\textwidth]{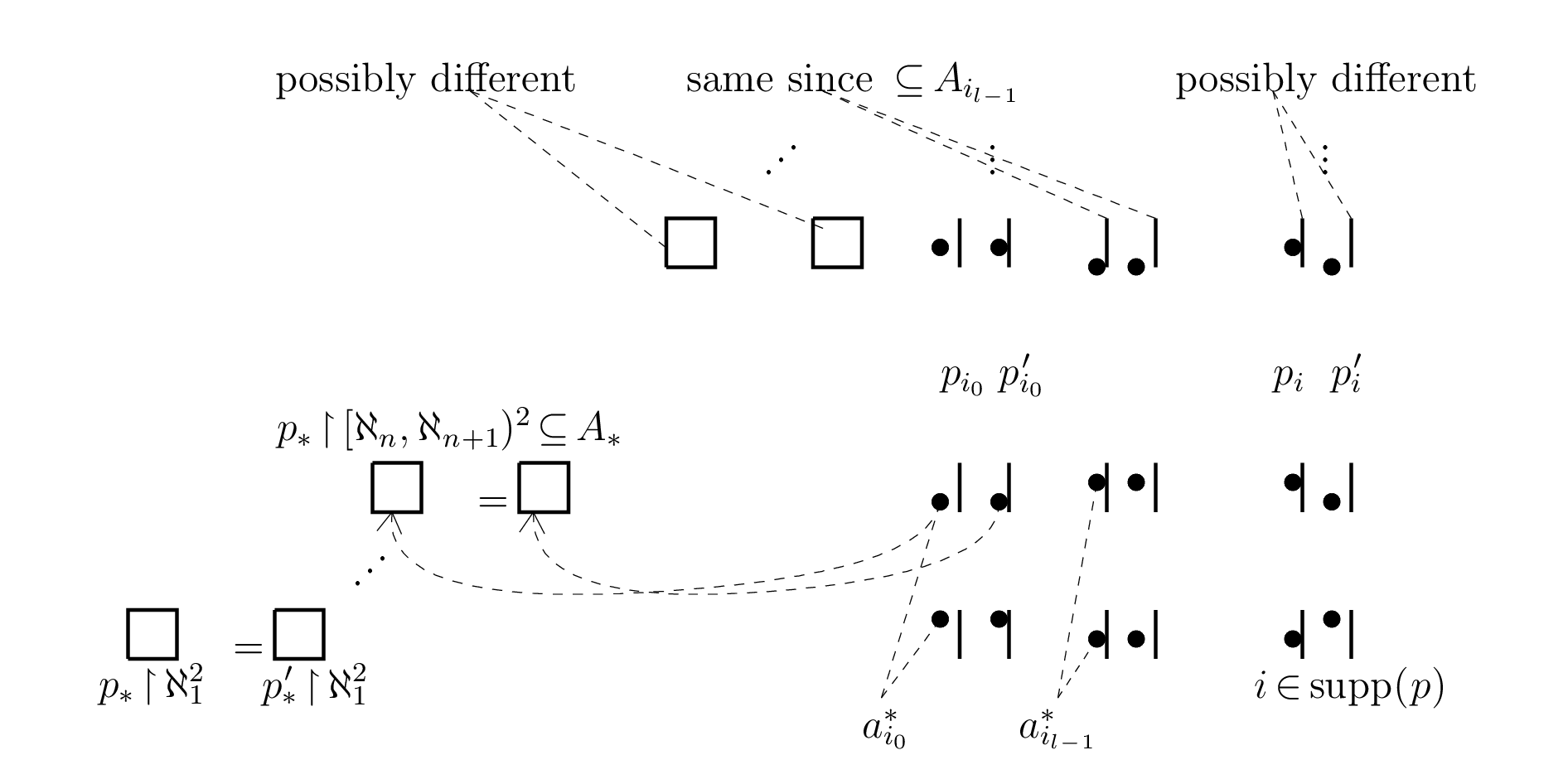}
  
  Now define a map
  \[ \pi : (P \upharpoonright p, \leqslant_P) \rightarrow (P \upharpoonright
     p', \leqslant_P), \]
  where the restricted partial orders are defined as $P \upharpoonright p =\{q
  \in P | q \leqslant_P p\}$ and $P \upharpoonright p' =\{q' \in P | q'
  \leqslant_P p' \}$. For $q = (q_{\ast}, (b_i, q_i)_{i < \lambda})
  \leqslant_P (p_{\ast}, (a_i, p_i)_{i < \lambda}) = p$ define $\pi (q) = \pi
  (q_{\ast}, (b_i, q_i)_{i < \lambda})) = (q_{\ast}', (b'_i, q'_i)_{i <
  \lambda})$ by the following three conditions:
  
  (14) $q_{\ast}' = (q_{\ast} \setminus p_{\ast}) \cup p_{\ast}' $; note that
  this is a legitimate function since $\tmop{dom} (p_{\ast}) = \tmop{dom}
  (p_{\ast}')$;
  
  (15) $\tmop{for} i < \lambda \tmop{let} b'_i = (b_i \setminus a_i) \cup a'_i
  $; so if $i \in \tmop{supp} (p)$, the $m + 1$ ordinals in $a_i$ are
  substituted by the $m + 1$ ordinals in $a_i' $; if $i \nin \tmop{supp} (p)$,
  we have $b'_i = b_i $;
  
  (16) for $i \in \lambda \setminus \tmop{supp} (q)$ let $q'_i = \emptyset$,
  and for $i \in \tmop{supp} (q)$ define $q'_i : \tmop{dom} (q_i) \rightarrow
  2$ by setting 
  $q'_i (\zeta)$
  to be
   
   $\left\{ \begin{array}{l}
       p'_i (\zeta) \text{, if } \zeta \in \tmop{dom} (p_i) ;\\
       q_{\ast} (\xi', \zeta) \text{, if } \zeta \nin \tmop{dom} (p_i) \wedge \\
       \phantom{p'_i (\zeta) \text{, if }} \wedge
       \exists k < \omega : \zeta \in [\aleph_k, \aleph_{k + 1}) \wedge a'_i
       \cap [\aleph_k, \aleph_{k + 1}) =\{\xi' \};\\
       q_i (\zeta) \text{, if } \zeta \nin \tmop{dom} (p_i) \wedge \\ 
       \phantom{p'_i (\zeta) \text{, if }} \wedge \exists k <
       \omega : \zeta \in [\aleph_k, \aleph_{k + 1}) \wedge a'_i \cap
       [\aleph_k, \aleph_{k + 1}) = \emptyset .
     \end{array} \right. $
     
  Here is a picture of (some features of) $\pi$.
  
  \noindent\includegraphics[width=\textwidth]{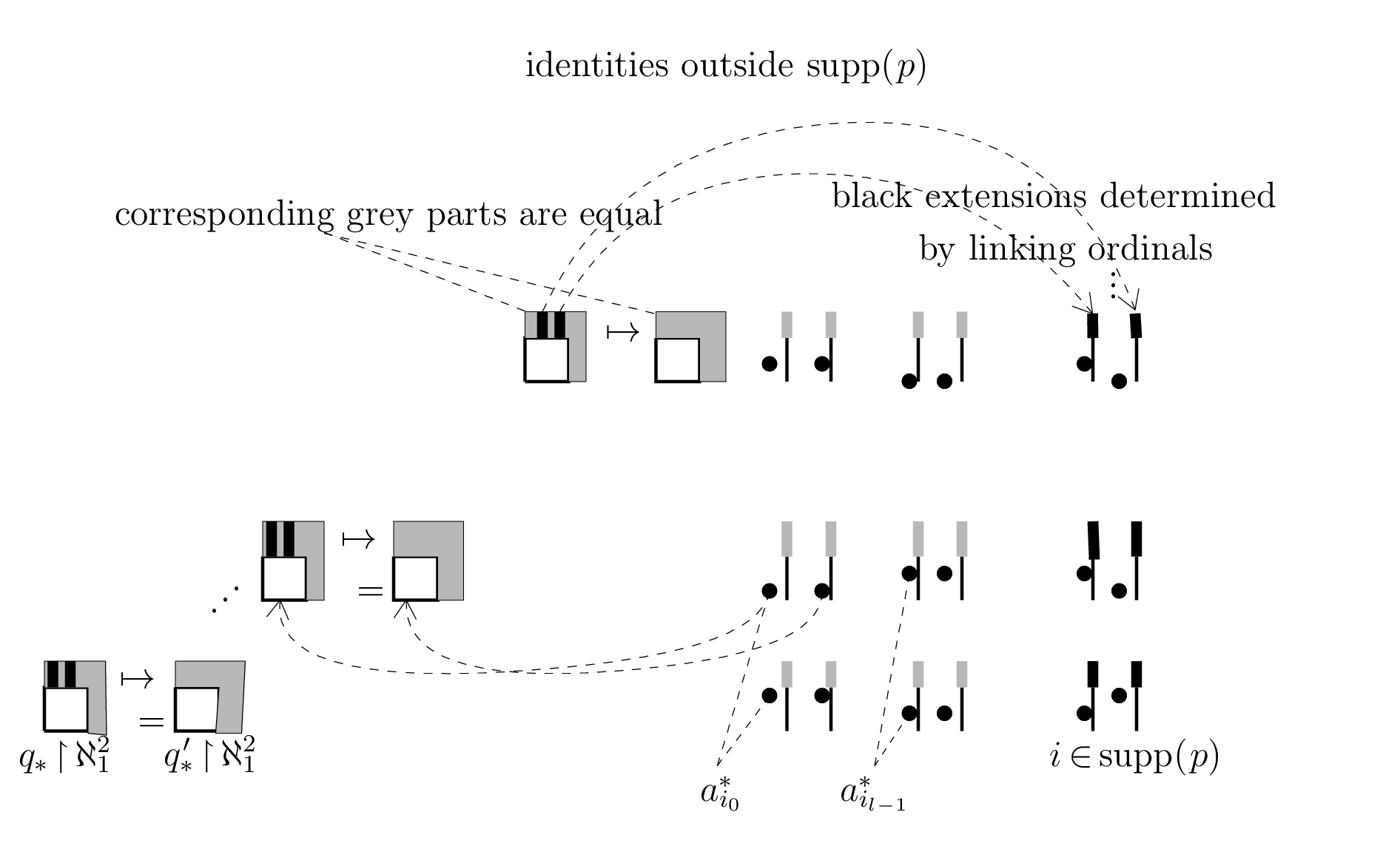}

  We verify that $\pi : (P \upharpoonright p, \leqslant_P) \rightarrow (P
  \upharpoonright p', \leqslant_P)$ is an {\tmem{isomorphism}}.
  
  (17) $(q_{\ast}', (b'_i, q'_i)_{i < \lambda}) \in P$, \\ since it has the same
  structure as $(q_{\ast}, (b_i, q_i)_{i < \lambda})$, with some function
  values altered.
  
  (18) $(q_{\ast}', (b'_i, q'_i)_{i < \lambda}) \leqslant_P (p_{\ast}', (a'_i,
  p'_i)_{i < \lambda})$.\\
  {\tmem{Proof}}. $q_{\ast}' \supseteq p_{\ast}'$ since $q_{\ast}' = (q_{\ast}
  \setminus p_{\ast}) \cup p_{\ast}' $, see (14). Similarly we get $b_i'
  \supseteq a'_i$ and $q'_i \supseteq p'_i $. To check the linking property
  (Definition \ref{def_of_forcing}, b)), consider $i < \lambda$, $n < \omega$,
  and $\xi' \in a'_i \cap [\aleph_n, \aleph_{n + 1})$. For $\zeta \in
  \tmop{dom} (q'_i \setminus p'_i)$ we have
  \[ q_i' (\zeta) = q_{\ast} (\xi', \zeta) = q_{\ast}' (\xi', \zeta) . \]
  Finally we have to show the independence property (Definition
  \ref{def_of_forcing}, c)) within the linking ordinals. Consider $j \in
  \tmop{supp} (p') = \tmop{supp} (p)$. We claim that $(b'_j \setminus a'_j)
  \cap \bigcup_{i \in \tmop{supp} (p'), i \neq j} b'_i = \emptyset$. Assume
  for a contradiction that $\xi' \in (b'_j \setminus a'_j) \cap b'_i$ for some
  $i \in \tmop{supp} (p'), i \neq j$. Then $\xi' \in (b_j \setminus a_j) \cap
  ((b_i \setminus a_i) \cup a'_i)$. The case $\xi' \in (b_j \setminus a_j)
  \cap (b_i \setminus a_i)$ is impossible by the independence property in $q
  \leqslant_P p$. And
  \[ (b_j \setminus a_j) \cap \bigcup_{i \in \tmop{supp} (p'), i \neq j} a'_i
     = (b_j \setminus a_j) \cap \bigcup_{i \in \tmop{supp} (p), i \neq j} a_i
     = \emptyset \]
  by the independence property in $q \leqslant_P p$ and by (12).
  {\tmem{qed}}(18)
  
  (19) $\pi$ is order-preserving.\\
  {\tmem{Proof}}. Consider
  \[ r = (r_{\ast}, (c_i, r_i)_{i < \lambda}) \leqslant_P q = (q_{\ast}, (b_i,
     q_i)_{i < \lambda}) \leqslant_P p = (p_{\ast}, (a_i, p_i)_{i < \lambda})
  \]
  and $\pi (r) = r' = (r'_{\ast}, (c'_i, r'_i)_{i < \lambda})$, $\pi (q) = q'
  = (q'_{\ast}, (b'_i, q'_i)_{i < \lambda})$. We show that $r' \leqslant_P
  q'$. Concerning the inclusions:
  \begin{itemizeminus}
    \item $r'_{\ast} = (r_{\ast} \setminus p_{\ast}) \cup p_{\ast}' \supseteq
    (q_{\ast} \setminus p_{\ast}) \cup p_{\ast}' = q'_{\ast} $;
    
    \item $c_i' = (c_i \setminus a_i) \cup a'_i \supseteq (b_i \setminus a_i)
    \cup a'_i = b'_i$ ;
    
    \item if $i \in \lambda \setminus \tmop{supp} (q)$, then $q'_i =
    \emptyset$ and hence $q'_i \subseteq r'_i $. If $i \in \tmop{supp} (q)$
    then $i \in \tmop{supp} (r)$, and $\tmop{dom} (r'_i) = \tmop{dom} (r_i)$
    and $\tmop{dom} (q'_i) = \tmop{dom} (q_i)$. So we have
    \[ \tmop{dom} (p_i) = \tmop{dom} (p_i') \subseteq \tmop{dom} (q'_i)
       \subseteq \tmop{dom} (r'_i) . \]
    For $\zeta \in \tmop{dom} (q'_i)$ we have to show that $q'_i (\zeta) =
    r'_i (\zeta)$. In case $\zeta \in \tmop{dom} (p_i)$ we have
    \[ q'_i (\zeta) = p'_i (\zeta) = r'_i (\zeta) . \]
    In case $\zeta \nin \tmop{dom} (p_i) \wedge \exists k < \omega : \zeta \in
    [\aleph_k, \aleph_{k + 1}) \wedge a'_i \cap [\aleph_k, \aleph_{k + 1})
    =\{\xi' \}$ we have
    \[ q'_i (\zeta) = q_{\ast} (\xi', \zeta) = r_{\ast} (\xi', \zeta) = r'_i
       (\zeta) . \]
    In case $\zeta \in \tmop{dom} (p_i) \wedge \exists k < \omega : \zeta \in
    [\aleph_k, \aleph_{k + 1}) \wedge a'_i \cap [\aleph_k, \aleph_{k + 1}) =
    \emptyset$ we have
    \[ q'_i (\zeta) = q_i (\zeta) = r_i (\zeta) = r'_i (\zeta) . \]
  \end{itemizeminus}
  For the linking property consider $i < \lambda$, $n < \omega$, and $\xi' \in
  b'_i \cap [\aleph_n, \aleph_{n + 1})$. We have to check that $\forall \zeta
  \in \tmop{dom} (r_i' \setminus q'_i) \cap [\aleph_n, \aleph_{n + 1}) : r'_i
  (\zeta) = r_{\ast}' (\xi, \zeta)$. So consider $\zeta \in \tmop{dom} (r_i'
  \setminus q'_i) \cap [\aleph_n, \aleph_{n + 1})$. Note that $b'_i = (b_i
  \setminus a_i) \cup a'_i $. In case $\xi' \in a'_i$ we get:
  \[ r'_i (\zeta) = r_{\ast} (\xi', \zeta) = r'_{\ast} (\xi', \zeta) . \]
  If $\xi' \in b_i \setminus a_i $, $\xi' \nin a'_i$ and so $a'_i \cap
  [\aleph_n, \aleph_{n + 1}) = \emptyset$. Hence
  \[ r'_i (\zeta) = r_i (\zeta) = r_{\ast} (\xi', \zeta) = r'_{\ast} (\xi',
     \zeta) . \]
  For the independence property consider $j \in \tmop{supp} (q')$. We have to show
  that
  \[ (c'_j \setminus b'_j) \cap \bigcup_{i \in \tmop{supp} (q'), i \neq j}
     c'_i = \emptyset . \]
  Suppose for a contradiction that $\xi' \in (c'_j \setminus b'_j) \cap
  \bigcup_{i \in \tmop{supp} (q'), i \neq j} c'_i $. Then $\xi' \in c'_j
  \setminus b'_j = c_j \setminus b_j $. Take $i \in \tmop{supp} (q'), i \neq
  j$ such that $\xi' \in c'_i $. If $i \in \tmop{supp} (p')$ this contradicts
  the property $r' \leqslant_P p' $. So $i \in \tmop{supp} (q') \setminus
  \tmop{supp} (p')$. Then $c'_i = c_i$ and $\xi' \in (c_j \setminus b_j) \cap
  c_i $. But this contradicts the independence property for $r \leqslant_P q$.
  {\tmem{qed}}(19)
  
  The definition of the map $\pi$ only uses properties of $p$ and $p'$ which
  are the same for both of $p$ and $p'$. So we can similarly define a map
  \[ \pi' : (P \upharpoonright p', \leqslant_P) \rightarrow (P \upharpoonright
     p, \leqslant_P), \]
  where for $q' = (q'_{\ast}, (b'_i, q'_i)_{i < \lambda}) \leqslant_P
  (p'_{\ast}, (a'_i, p'_i)_{i < \lambda}) = p'$ the image $\pi' (q') =
  (q_{\ast}, (b_i, q_i)_{i < \lambda})$ is defined by
  
  (20) $q_{\ast} = (q'_{\ast} \setminus p'_{\ast}) \cup p_{\ast}$;
  
  (21) $\tmop{for} i < \lambda \tmop{let} b_i = (b'_i \setminus a'_i) \cup a_i
  $;
  
  (22) for $i \in \lambda \setminus \tmop{supp} (q')$ let $q_i = \emptyset$,
  and for $i \in \tmop{supp} (q')$ define $q_i : \tmop{dom} (q'_i) \rightarrow
  2$ by setting  $q_i (\zeta)$ equal to
  \[\left\{ \begin{array}{l}
       p_i (\zeta) \text{, if } \zeta \in \tmop{dom} (p'_i),\\
       q'_{\ast} (\xi, \zeta) \text{, if } \zeta \nin \tmop{dom} (p'_i) \wedge \\
       \phantom{q'_{\ast} (\xi, \zeta) \text{, if }} \wedge \exists k < \omega : 
       \zeta \in [\aleph_k, \aleph_{k + 1}) \wedge a_i
       \cap [\aleph_k, \aleph_{k + 1}) =\{\xi\},\\
       q'_i (\zeta) \text{, if } \zeta \nin \tmop{dom} (p'_i) \wedge \\ 
       \phantom{q'_{\ast} (\xi, \zeta) \text{, if }}\wedge \exists k
       < \omega : \zeta \in [\aleph_k, \aleph_{k + 1}) \wedge
       a_i \cap
       [\aleph_k, \aleph_{k + 1}) = \emptyset .
     \end{array} \right. \]
  The maps $\pi$ and $\pi'$ are inverses:
  
  (23) $\pi' \circ \pi : (P \upharpoonright p, \leqslant_P) \rightarrow (P
  \upharpoonright p, \leqslant_P)$ is the identity on $(P \upharpoonright p,
  \leqslant_P)$.\\
  {\tmem{Proof}}. Let $(q_{\ast}, (b_i, q_i)_{i < \lambda}) \leqslant_P p =
  (p_{\ast}, (a_i, p_i)_{i < \lambda})$ and let \[\pi (q_{\ast}, (b_i, q_i)_{i
  < \lambda}) = (q'_{\ast}, (b'_i, q'_i)_{i < \lambda}).\] Concerning the first
  component,
  \[ q_{\ast}  \overset{\pi}{\longmapsto} (q_{\ast} \setminus p_{\ast}) \cup
     p_{\ast}'  \overset{\pi'}{\longmapsto} (((q_{\ast} \setminus p_{\ast})
     \cup p_{\ast}') \setminus p'_{\ast}) \cup p_{\ast} = q_{\ast} . \]
  For $i < \lambda$,
  \[ b_i  \overset{\pi}{\longmapsto} (b_i \setminus a_i) \cup a_i' 
     \overset{\pi'}{\longmapsto} (((b_i \setminus a_i) \cup a_i') \setminus
     a'_i) \cup a_i = b_i . \]
  For $i \in \lambda \setminus \tmop{supp} (q)$, $q_i = q'_i = \emptyset$ and
  so
  \[ q_i  \overset{\pi}{\longmapsto} q_i'  \overset{\pi'}{\longmapsto} q_i .
  \]
  Now consider $i \in \tmop{supp} (q) = \tmop{supp} (q')$. Then $\tmop{dom}
  (q_i) = \tmop{dom} (q_i')$. Let $\zeta \in \tmop{dom} (q_i)$. In case $\zeta
  \in \tmop{dom} (p_i) = \tmop{dom} (p'_i)$ we have
  \[ q_i (\zeta) = p_i (\zeta) \overset{\pi}{\longmapsto} p'_i (\zeta) = q'_i
     (\zeta) \overset{\pi'}{\longmapsto} p_i (\zeta) = q_i (\zeta) . \]
  In case $\zeta \nin \tmop{dom} (p_i) \wedge \exists k < \omega : \zeta \in
  [\aleph_k, \aleph_{k + 1}) \wedge a_i \cap [\aleph_k, \aleph_{k + 1})
  =\{\xi\}$ let $a'_i \cap [\aleph_k, \aleph_{k + 1}) =\{\xi' \}$. Then $q_i
  (\zeta) = q_{\ast} (\xi, \zeta)$ and $q'_i (\zeta) = q'_{\ast} (\xi',
  \zeta)$. Then
  \[ q_i (\zeta) \overset{\pi}{\longmapsto} q'_{\ast} (\xi', \zeta)
     \overset{\pi'}{\longmapsto} q_{\ast} (\xi, \zeta) = q_i (\zeta) . \]
  Finally, if $\zeta \nin \tmop{dom} (p_i) \wedge \exists k < \omega : \zeta
  \in [\aleph_k, \aleph_{k + 1}) \wedge a_i \cap [\aleph_k, \aleph_{k + 1}) =
  \emptyset$ then $q_i (\zeta) = q_i (\zeta)$ and so
  \[ q_i (\zeta) \overset{\pi}{\longmapsto} q'_i (\zeta)
     \overset{\pi'}{\longmapsto} q_i (\zeta) . \]
  Thus
  \[ p \overset{\pi}{\longmapsto} p'  \overset{\pi'}{\longmapsto} p . \]
  {\tmem{qed}}(23)
  
  Similarly,
  
  (24) $\pi \circ \pi' : (P' \upharpoonright p', \leqslant_P) \rightarrow (P'
  \upharpoonright p', \leqslant_P)$ is the identity on $(P' \upharpoonright
  p', \leqslant_P)$.
  
  Hence $\pi : (P \upharpoonright p, \leqslant_P) \rightarrow (P
  \upharpoonright p', \leqslant_P)$ is an isomorphism. Before we apply $\pi$
  to generic filters and objects defined from them, we note some properties of
  $\pi$.
  
  (25) Let $q = (q_{\ast}, (b_i, q_i)_{i < \lambda}) \leqslant_P p$ and $\pi
  (q) = (q_{\ast}', (b'_i, q'_i)_{i < \lambda})$. Then $q_{\ast}'
  \upharpoonright (\aleph^V_{n + 1})^2 = q_{\ast} \upharpoonright (\aleph^V_{n
  + 1})^2$, and $q'_{i_0} = q_{i_0}, \ldots, q'_{i_{l - 1}} = q_{i_{l - 1}} $.
  
  Now let $H_0$ be a $V$-generic filter for $(P \upharpoonright p,
  \leqslant_P)$ with $p \in H_0$. Then
  \[ H =\{r \in P | \exists q \in H_0 : q \leqslant_P r\} \]
  is a $V$-generic filter for $P$ with $p \in H$.
  
  Moreover, $H_0' = \pi [H_0]$ is a $V$-generic filter for $(P \upharpoonright
  p', \leqslant_P)$ with $p' \in H_0'$ and
  \[ H' =\{r \in P | \exists q \in H'_0 : q \leqslant_P r\} \]
  is a $V$-generic filter for $P$ with $p' \in H'$.
  
  (26) $V [H] = V [H']$ since the generic filters can be defined from each
  other using the isomorphism $\pi \in V$.
  
  Now define the parameters used in the definition of the model $N$ from the
  generic filters $H$ and $H'$:\\
  $H_{\ast} =\{q_{\ast} \in P_{\ast} | (q_{\ast}, (b_i, q_i)_{i < \lambda})
     \in H\}, T_{\ast} = \sigma^H, \vec{A} = \tau^H, A_{\ast} = \dot{A}^H,
     \text{ and } A_i = \dot{A}_i^H  \text{ for } i < \lambda $\\
  and\\
  $ H'_{\ast} =\{q_{\ast} \in P_{\ast} | (q_{\ast}, (b_i, q_i)_{i < \lambda})
     \in H' \}, T'_{\ast} = \sigma^{H'}, \overrightarrow{A'} = \tau^{H'},
     A'_{\ast} = \dot{A}^{H'}, \text{ and } A'_i = \dot{A}_i^{H'}  \text{ for
     } i < \lambda . $\\
  where $\sigma, \tau, \dot{A}, \dot{A}_i$
  are the canonical names for $T_{\ast}, \vec{A}, A_{\ast}, A_i$ resp. used
  in the definition of $X'$ above. 
  Note that to simplify notation we are redefining the 
  previously used constants 
  $T_{\ast}, \vec{A}, A_{\ast}, A_i$ for the remainder of the
  current proof. This does not conflict with the use of these constants 
  before and after this proof.
  
  (27) $V [H_{\ast}] = V [H'_{\ast}]$.\\
  {\tmem{Proof}}. Since $H_{\ast}$ is $V$-generic for $P_{\ast}$ and $p_{\ast}
  \in P_{\ast} $, $H_{\ast} \cap \{q_{\ast} \in P_{\ast} | q_{\ast} \supseteq
  p_{\ast} \}$ is, over the ground model $V$, equidefinable with $H_{\ast} $.
  Hence
  \begin{eqnarray*}
    V [H_{\ast}] & = & V [H_{\ast} \cap \{q_{\ast} \in P_{\ast} | q_{\ast}
    \supseteq p_{\ast} \}]\\
    & \supseteq & V [\{(q_{\ast} \setminus p_{\ast}) \cup p'_{\ast} |
    q_{\ast} \in H_{\ast} \cap \{q_{\ast} \in P_{\ast} | q_{\ast} \supseteq
    p_{\ast} \}]_{}\\
    & = & V [H'_{\ast} \cap \{q_{\ast} \in P_{\ast} | q_{\ast} \supseteq
    p'_{\ast} \}]\\
    & = & V [H'_{\ast}]
  \end{eqnarray*}
  {\tmem{qed}}(27)
  
  This implies
  
  (28) $T_{\ast} = T_{\ast}' $.
  
  Let $A_{\ast} = \bigcup H_{\ast}$ and $A'_{\ast} = \bigcup H'_{\ast} $.
  
  (29) $A_{\ast} \upharpoonright (\aleph^V_{n + 1})^2 = A'_{\ast}
  \upharpoonright (\aleph^V_{n + 1})^2$.\\
  {\tmem{Proof}}. Note that the map $\pi$ is the identity on the
  $\ast$-component below $\aleph^V_{n + 1} $. {\tmem{qed}}(29)
  
  (30) For $i < \lambda :$ $A_i \sim A'_i $.\\
  {\tmem{Proof}}. Recall $A_i = \bigcup \{q_i | (q_{\ast}, (b_j, q_j)_{j <
  \lambda}) \in H\}: [\aleph_0, \aleph^V_{\omega}) \rightarrow 2$. Since the
  map $\pi$ maps the set $b_i$ of linking ordinals to $(b_i \setminus a_i)
  \cup a_i'$ the linking ordinals in the relevant sets $b_i$ are equal to the
  linking ordinals in the sets $b'_i$ with possibly finitely many exceptions.
  This means that the characteristic functions $A_i$ and $A_i'$ will be equal
  above $p_i$ and $p_i'$ respectively in all cardinal intervals $[\aleph_k,
  \aleph_{k + 1})$ with $k > m$). In other words,
  \[ (A_i \oplus A_i') \upharpoonright [\aleph^V_{m + 1}, \aleph^V_{\omega})
     \in V. \]
  The functions $A_i \upharpoonright \aleph^V_{m + 1}$ and $A'_i
  \upharpoonright \aleph^V_{m + 1}$ are determined in the cardinal intervals
  $[\aleph^V_k, \aleph^V_{k + 1})$ for $k \leqslant m$ by $p_i \upharpoonright
  [\aleph^V_k, \aleph^V_{k + 1})$ and \ $p'_i \upharpoonright [\aleph^V_k,
  \aleph^V_{k + 1})$ and some cuts $A_{\ast} (\xi)$ and $A_{\ast} (\xi')$
  respectively. Hence $A_i \upharpoonright [\aleph^V_k, \aleph^V_{k + 1}),
  A'_i \upharpoonright [\aleph^V_k, \aleph^V_{k + 1}) \in V [A_{\ast}
  \upharpoonright (\aleph_{m + 1}^V)^2] = V [A'_{\ast} \upharpoonright
  (\aleph_{m + 1}^V)^2]$. Thus
  \[ (A_i \oplus A_i') \upharpoonright \aleph^V_{m + 1} \in V [H_{\ast}]
     \text{ and } (A_i \oplus A_i') \upharpoonright [\aleph^V_{m + 1},
     \aleph^V_{\omega}) \in V, \]
  i.e., $A_i \sim A_i' $. {\tmem{qed}}(30)
  
  This implies immediately that the sequences of equivalence classes agree in
  both models:
  
  (31) $\vec{A} = \overrightarrow{A'}$.
  
  (32) $A_{i_0} = A_{i_0}', \ldots, A_{i_{l - 1}} = A_{i_{l - 1}}' $.\\
  {\tmem{Proof}}. Note that the isomorphism $\pi$ is the identity at the
  indices $i_0, \ldots, i_{l - 1} $. {\tmem{qed}}(32)
  
  Since $p \Vdash \varphi ( \check{u}, \check{x}, \sigma, \tau, \dot{A}
  \upharpoonright ( \check{\aleph}_{n + 1})^2, \dot{A}_{i_0}, \ldots,
  \dot{A}_{i_{l - 1}})\}$ and $p \in H$ we have
  \[ V [H] \models \varphi (u, x, T_{\ast}, \vec{A}, A_{\ast} \upharpoonright
     (\aleph_{n + 1}^V)^2, A_{i_0}, \ldots, A_{i_{l - 1}}) . \]
  Since $p' \Vdash \neg \varphi ( \check{u}, \check{x}, \sigma, \tau, \dot{A}
  \upharpoonright ( \check{\aleph}_{n + 1})^2, \dot{A}_{i_0}, \ldots,
  \dot{A}_{i_{l - 1}})\}$ and $p' \in H'$ we have
  \[ V [H'] \models \neg \varphi (u, x, T'_{\ast}, \overrightarrow{A'},
     A'_{\ast} \upharpoonright P_{\ast} \upharpoonright (\aleph^V_{n + 1})^2,
     A'_{i_0}, \ldots, A'_{i_{l - 1}}) . \]
  But the various equalities proved above imply
  \[ V [H] \models \neg \varphi (u, x, T_{\ast}, \vec{A}, A_{\ast}
     \upharpoonright P_{\ast} \upharpoonright (\aleph^V_{n + 1})^2, A_{i_0},
     \ldots, A_{i_{l - 1}}), \]
  which is the desired contradiction.
\end{proof}

\section{Wrapping up}

We show that the approximation models are mild generic extensions of $V$.

\begin{lemma}
  \label{cardinal_absoluteness}Let $n < \omega$ and $i_0, \ldots, i_{l - 1} <
  \lambda $. Then cardinals are absolute between $V$ and $V [A^{\ast}
  \upharpoonright (\aleph^V_{n + 1})^2, A_{i_0}, \ldots, A_{i_{l - 1}}]$.
\end{lemma}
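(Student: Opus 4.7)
The plan is to identify a forcing $Q \in V$ of cardinality at most $\aleph^V_{\omega+1}$ for which $W := V[A_\ast \upharpoonright (\aleph^V_{n+1})^2, A_{i_0}, \ldots, A_{i_{l-1}}]$ is a $V$-generic extension, and then to establish cardinal preservation for $Q$ by a product analysis modelled on Lemma \ref{preservation}.

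I would define $Q$ as the restriction of $P$ retaining only the $\ast$-component on $(\aleph^V_{n+1})^2$ and the coordinates $i_0, \ldots, i_{l-1}$. Concretely, conditions of $Q$ are tuples $(q_\ast, (c_j, q_j)_{j<l})$ where $q_\ast : \bigcup_{m \leq n}[\aleph_m, \delta_m)^2 \to 2$, each $q_j \in P_0$, and each $c_j \subseteq \aleph^V_{n+1}$ is finite with at most one element per interval $[\aleph_m, \aleph_{m+1})$ for $m \leq n$; the order is the natural restriction of $\leq_P$, imposing linking at levels $\leq n$ via the $c_j$'s together with independence of the $c_j$'s. The natural projection
\[
\pi(p_\ast, (a_i, p_i)_{i<\lambda}) = \bigl(p_\ast \upharpoonright (\aleph^V_{n+1})^2,\ (a_{i_j} \cap \aleph^V_{n+1},\ p_{i_j})_{j<l}\bigr)
\]
from $P$ to $Q$ is plainly order-preserving; the key technical claim is that it is a projection of forcings. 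Given $p \in P$ and $\tilde{q} \leq_Q \pi(p)$, one lifts $\tilde{q}$ to $\tilde{p} \leq_P p$ by setting $\tilde{p}_{i_j} = \tilde{q}_j$, adjoining to $a_{i_j}$ the new low-level linking ordinals in $\tilde{c}_j \setminus a_{i_j}$, extending $p_\ast$ to agree with $\tilde{q}_\ast$ on $(\aleph^V_{n+1})^2$, and further extending $\tilde{p}_\ast$ above $\aleph^V_{n+1}$ at pairs $(\xi, \zeta)$ with $\xi \in a_{i_j}$ at level $m > n$ to force $\tilde{p}_\ast(\xi, \zeta) = \tilde{q}_j(\zeta)$ for new $\zeta$; $P$-independence survives because all new elements of $a_{i_j}$ lie below $\aleph^V_{n+1}$, where $Q$-independence controls disjointness. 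Then $H := \{\tilde{q} \in Q : \exists p \in G,\ \pi(p) \leq_Q \tilde{q}\}$ is $V$-generic for $Q$ with $V[H] = W$.

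For cardinal preservation of $Q$: since $|Q| \leq \aleph^V_{\omega+1}$, $Q$ has $\aleph^V_{\omega+2}$-c.c.\ and preserves cardinals $\geq \aleph^V_{\omega+2}$. For each $k < \omega$, factor $Q \cong Q^{\leq k} \times Q^{>k}$ along the splitting at cardinal level $\aleph_{k+1}$; this is a genuine product because linking (at levels $\leq n$) and independence respect the split. The high factor $Q^{>k}$ is $<\aleph_{k+1}$-closed (each level-$m$ Cohen factor of $P_\ast$ is $<\aleph_{m+1}$-closed and each $P_0$-tail is $<\aleph_{k+1}$-closed), so it preserves $\aleph^V_{k+1}$ and all smaller cardinals. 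The low factor $Q^{\leq k}$ has cardinality $\leq \aleph^V_{k+1}$; iterating the same factorization inductively, as in Lemma \ref{preservation}, shows it preserves all cardinals up to $\aleph^V_{k+1}$. Hence $\aleph_\omega^V$ is preserved, and $\aleph^V_{\omega+1}$ is preserved by the Cantor-style bound $(2^{\aleph_\omega})^{V[H]} \leq \aleph^V_{\omega+1}$.

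The main obstacle is the verification that $\pi$ is a projection of forcings. The delicate point is that the high-level ($m > n$) linking information is projected away in $Q$, yet the lift $\tilde{p}$ must satisfy $P$-linking at all levels against the untouched linking ordinals in $a_{i_j} \cap [\aleph^V_{n+1}, \aleph^V_\omega)$; the remedy is to freely extend $\tilde{p}_\ast$ above $\aleph^V_{n+1}$ to force the values of $\tilde{q}_j$ at those ordinals, and one must check that this extension does not spoil the relation $\pi(\tilde{p}) \leq_Q \tilde{q}$ or the $P$-independence among the $a_{i_j}$'s and other $a_k$'s in $\mathrm{supp}(p)$.
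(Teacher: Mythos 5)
Your overall strategy --- recognize $W = V[A_\ast\upharpoonright(\aleph^V_{n+1})^2, A_{i_0},\ldots,A_{i_{l-1}}]$ as a mild generic extension of $V$ and then run a $\leq k$ / $>k$ product analysis as in Lemma \ref{preservation} --- is the same as the paper's, but the route you take to exhibit $W$ as a generic extension (a projection $\pi:P\to Q$) has a genuine gap as formulated. The lift of $\tilde q\leq_Q \pi(p)$ fails for a general $p\in P$: if some linking ordinal $\xi$ at a level $m>n$ belongs to $a_{i_j}\cap a_{i_{j'}}$ with $j\neq j'$ (which is permitted in $P$, since the independence clause of Definition \ref{def_of_forcing} only constrains $a$'s at coordinates in $\mathrm{supp}$ of the condition being extended, not arbitrary conditions), then the linking property forces $\tilde p_\ast(\xi,\zeta)$ to equal both $\tilde q_j(\zeta)$ and $\tilde q_{j'}(\zeta)$ on new $\zeta$ --- but in $Q$ the coordinates $q_j$ and $q_{j'}$ are unlinked above level $n$ and may be chosen to disagree, so no such $\tilde p_\ast$ exists. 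Your closing remark flags that independence ``must be checked''; the point is that the check can fail, not merely that it needs to be written out.

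The missing ingredient is exactly the opening move of the paper's proof: first choose $p^0\in G$ with $\{i_0,\ldots,i_{l-1}\}\subseteq\mathrm{supp}(p^0)$, then increase $n$ (using the monotonicity of the models $V[A_\ast\upharpoonright(\aleph^V_{n+1})^2,\ldots]$ in $n$) so that all linking ordinals of $p^0$ lie below $\aleph^V_{n+1}$. For any $p\leq_P p^0$, the independence property then yields that $a_{i_0}\cap[\aleph^V_{n+1},\aleph^V_\omega),\ldots,a_{i_{l-1}}\cap[\aleph^V_{n+1},\aleph^V_\omega)$ are pairwise disjoint (and disjoint from every other $a_i$ with $i\in\mathrm{supp}(p^0)$), which is precisely what removes the conflict in your lift. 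You would then have to restrict the projection to $P\upharpoonright p^0$, check that the other coordinates $i\in\mathrm{supp}(p)\setminus\{i_0,\ldots,i_{l-1}\}$ can have their $p_i$'s extended along $\tilde p_\ast$ consistently, and verify $V[H]=W$. Worth noting: after the reduction, the paper bypasses the auxiliary poset $Q$ altogether --- since $A_{i_j}\upharpoonright\aleph^V_{n+1}\in V[A_\ast\upharpoonright(\aleph^V_{n+1})^2]$ modulo a finite initial segment, one has $W=V[A_\ast\upharpoonright(\aleph^V_{n+1})^2, A_{i_0}\upharpoonright[\aleph^V_{n+1},\aleph^V_\omega),\ldots]$, and a direct density argument (plus a closure/cardinality argument for mutual genericity) shows this is generic for $P_\ast\upharpoonright(\aleph^V_{n+1})^2\times(P'')^l\cong P_0$, so Lemma \ref{preservation} applies immediately; your $Q$ carries extra linking structure below $\aleph^V_{n+1}$ whose cardinal-preservation you then have to re-establish from scratch.
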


\begin{proof}
  Take $p^0 = (p^0_{\ast}, (a^0_i, p^0_i)_{i < \lambda}) \in G$ such that
  $\{i_0, \ldots, i_{l - 1} \} \subseteq \tmop{supp} (p^0)$. Since the models
  $V [A^{\ast} \upharpoonright (\aleph^V_{n + 1})^2, A_{i_0}, \ldots, A_{i_{l
  - 1}}]$ are monotonely growing with $n$ we may assume that $n$ is large
  enough such that
  \[ \forall i \in \tmop{supp} (p^0) \forall \xi \in a^0_i : \xi \in \aleph_{n +
     1} . \]
  Since every $A_{i_j} \cap \aleph^V_{n + 1}$ can be computed from $A^{\ast}
  \upharpoonright (\aleph^V_{n + 1})^2$, we have
  
  $\begin{array}{l} V [A^{\ast} \upharpoonright (\aleph^V_{n + 1})^2, A_{i_0}, \ldots,
     A_{i_{l - 1}}] = \\
     \phantom{V}
     = V [A^{\ast} \upharpoonright (\aleph^V_{n + 1})^2,
     A_{i_0} \upharpoonright [\aleph^V_{n + 1}, \aleph^V_{\omega}), \ldots,
     A_{i_{l - 1}} \upharpoonright [\aleph^V_{n + 1}, \aleph^V_{\omega})] .
     \end{array}$
     
  Let $P'' = (P'', \supseteq, \emptyset)$ be the forcing
  
  $\begin{array}{l} P'' =\{r | \exists (\delta_m)_{n < m < \omega} (\forall m \in [n + 1,
     \omega) : \\ 
     \phantom{P'' =\{r | \exists }\delta_m \in [\aleph^V_m, \aleph^V_{m + 1}) \wedge r :
     \bigcup_{n + 1 \leqslant m < \omega} [\aleph^V_m, \delta_m) \rightarrow
     2)\}, \end{array} $

  which adjoins {\tmname{Cohen}} subsets to the $\aleph_m$'s with $m > n$.
  
  (2) $(A_{i_0} \upharpoonright [\aleph^V_{n + 1}, \aleph^V_{\omega}), \ldots,
  A_{i_{l - 1}} \upharpoonright [\aleph^V_{n + 1}, \aleph^V_{\omega}))$ is
  $V$-generic for 
  \[(P'')^l = \underbrace{P'' \times \ldots \times P''}_{l
  \text{ times}}. \] \\
  {\tmem{Proof}}. Let $D \subseteq (P'')^l$ be dense open, $D \in V$. We have
  to show that $D$ is met by $(A_{i_0} \upharpoonright [\aleph^V_{n + 1},
  \aleph^V_{\omega}), \ldots, A_{i_{l - 1}} \upharpoonright [\aleph^V_{n + 1},
  \aleph^V_{\omega}))$. Let
  
  $\begin{array}{l} 
  D'' =\{(p_{\ast}, (a_i, p_i)_{i < \lambda}) \in Q | \\
  \phantom{D'' =\{ } (p_{i_0}
     \upharpoonright [\aleph^V_{n + 1}, \aleph^V_{\omega}), \ldots, p_{i_{l -
     1}} \upharpoonright [\aleph^V_{n + 1}, \aleph^V_{\omega})) \in D\}.
     \end{array}$
  
  This set is dense in $P$ below $p^0$: consider $p^1 = (p_{\ast}^1, (a^1_i,
  p^1_i)_{i < \lambda}) \leqslant_P (p_{\ast}^0, (a^0_i, p^0_i)_{i < \lambda})
  = p^0$. Take $(\delta_m)_{n + 1 \leqslant m < \omega}$ such that
  \[ p^1_{\ast} \upharpoonright [\aleph^V_{n + 1}, \aleph^V_{\omega})^2 :
     \bigcup_{n + 1 \leqslant m < \omega} [\aleph^V_m, \delta_m)^2 \rightarrow
     2 . \]
  Take $p_{i_0}, \ldots, p_{i_{l - 1}}$ such that
  \[ (p_{i_0} \upharpoonright [\aleph^V_{n + 1}, \aleph^V_{\omega}), \ldots,
     p_{i_{l - 1}} \upharpoonright [\aleph^V_{n + 1}, \aleph^V_{\omega})) \in
     D, \]
  and such that $p_{i_0}, \ldots, p_{i_{l - 1}}$ have the same domains.
  Through some ordinals in $a^1_{i_0}, \ldots, a^1_{i_{l - 1}} $, the choice
  of $p_{i_0}, \ldots, p_{i_{l - 1}}$ determines some values of $p_{\ast}$ by
  the linking property b) of Definition \ref{def_of_forcing}:

  \noindent $ \forall i < \lambda \forall m < \omega \forall \xi \in a_i \cap
     [\aleph_m, \aleph_{m + 1}) \forall \zeta \in \tmop{dom} (p_i \setminus
     p^1_i) \cap [\aleph_m, \aleph_{m + 1}) : p_i (\zeta) = p_{\ast} (\xi)
     (\zeta) . $
     
  \noindent The independence property implies that the linking sets $a^1_{i_0}, \ldots,
  a^1_{i_{l - 1}}$ are pairwise disjoint {\tmem{above}} $\aleph^V_{n + 1} $,
  i.e., the sets
  \[ a^1_{i_0} \cap [\aleph^V_{n + 1}, \aleph^V_{\omega}), \ldots, a^1_{i_{l -
     1}} \cap [\aleph^V_{n + 1}, \aleph^V_{\omega}) \]
  are pairwise disjoint. So the linking requirements can be satisfied
  simultaneously. Then we can amend the definition of the other components of
  $p \leqslant p^1$ and obtain $p \in D''$.
  
  By the genericity of $G$ take $(p_{\ast}, (a_i, p_i)_{i < \lambda}) \in D''
  \cap G$. Then
  \[ (p_{i_0} \upharpoonright [\aleph^V_{n + 1}, \aleph^V_{\omega}), \ldots,
     p_{i_{l - 1}} \upharpoonright [\aleph^V_{n + 1}, \aleph^V_{\omega})) \in
     D \]
  with
  
  \noindent $ p_{i_0} \upharpoonright [\aleph^V_{n + 1}, \aleph^V_{\omega}) \subseteq
     A_{i_0} \upharpoonright [\aleph^V_{n + 1}, \aleph^V_{\omega}), \ldots,
     p_{i_{l - 1}} \upharpoonright [\aleph^V_{n + 1}, \aleph^V_{\omega}))
     \subseteq A_{i_{l - 1}} \upharpoonright [\aleph^V_{n + 1},
     \aleph^V_{\omega}),$

  \noindent as required. {\tmem{qed}}(2)
  
  The forcing $(P'')^l$ is $< \aleph_{n + 2}$-closed. $A_{\ast}
  \upharpoonright (\aleph^V_{n + 1})^2$ is $V$-generic for the forcing
  \[ P_{\ast} \upharpoonright (\aleph^V_{n + 1})^2 =\{r \upharpoonright
     (\aleph^V_{n + 1})^2 | r \in P_{\ast} \}. \]
  By the $\tmop{GCH}$ in $V$, $\tmop{card} (P_{\ast} \upharpoonright
  (\aleph^V_{n + 1})^2) = \aleph_{n + 1} $. Hence every dense subset of
  $P_{\ast} \upharpoonright (\aleph^V_{n + 1})^2$ which is in $V [A_{i_0}
  \upharpoonright [\aleph^V_{n + 1}, \aleph^V_{\omega}), \ldots, A_{i_{l - 1}}
  \upharpoonright [\aleph^V_{n + 1}, \aleph^V_{\omega})]$ is already an
  element of $V$. Thus $A_{\ast} \upharpoonright (\aleph^V_{n + 1})^2$ is $V
  [A_{i_0} \upharpoonright [\aleph^V_{n + 1}, \aleph^V_{\omega}), \ldots,
  A_{i_{l - 1}} \upharpoonright [\aleph^V_{n + 1},
  \aleph^V_{\omega})]$-generic for $P^{\ast} \upharpoonright (\aleph^V_{n +
  1})^2 $. By standard properties of product forcing, 
  \[A_{\ast}
  \upharpoonright (\aleph^V_{n + 1})^2 \times \left( A_{i_0} \upharpoonright
  [\aleph^V_{n + 1}, \aleph^V_{\omega}), \ldots, A_{i_{l - 1}} \upharpoonright
  [\aleph^V_{n + 1}, \aleph^V_{\omega}) \right)\] 
  is generic for the forcing
  $P_{\ast} \upharpoonright (\aleph^V_{n + 1})^2 \times (P'')^l$. This forcing
  is canonically isomorphic to the initial forcing $P_0 $. By Lemma
  \ref{preservation}, cardinals are preserved between $V$ and $V [A_{\ast}
  \upharpoonright (\aleph^V_{n + 1})^2, A_{i_0} \upharpoonright [\aleph^V_{n +
  1}, \aleph^V_{\omega}), \ldots, A_{i_{l - 1}} \upharpoonright [\aleph^V_{n +
  1}, \aleph^V_{\omega})]$.
\end{proof}

\begin{lemma}
  Cardinals are absolute between $N$ and $V$, and in particular $\kappa =
  \aleph_{\omega}^V = \aleph_{\omega}^N $.
\end{lemma}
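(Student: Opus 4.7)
The plan is to derive cardinal absoluteness in $N$ from the two preceding lemmas by a standard reflection argument: any hypothetical collapse in $N$ can be coded as a set of ordinals, pushed down into an approximation model, and then ruled out there. More precisely, suppose for a contradiction that some $V$-cardinal $\mu$ fails to be a cardinal in $N$. Then there exists in $N$ a surjection $f : \alpha \to \mu$ for some ordinal $\alpha < \mu$. Encoding $f$ as a set of ordinals $X_f \subseteq \mu \cdot \mu$ (for instance via the G\"odel pairing function, which is absolute) yields $X_f \in N$ with $X_f \subseteq \tmop{Ord}$.

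By Lemma \ref{approximation} there are $n < \omega$ and $i_0, \ldots, i_{l-1} < \lambda$ such that
\[ X_f \in V[A_{\ast} \upharpoonright (\aleph^V_{n+1})^2, A_{i_0}, \ldots, A_{i_{l-1}}]. \]
From $X_f$ one recovers the surjection $f$ in this approximation model, so $\mu$ is collapsed there. This contradicts Lemma \ref{cardinal_absoluteness}, which asserts that cardinals are absolute between $V$ and the approximation model. Hence every $V$-cardinal remains a cardinal in $N$; conversely, since $N \subseteq V[G]$, every $N$-cardinal is a $V$-cardinal. Thus cardinals are absolute between $V$ and $N$.

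For the statement $\kappa = \aleph_{\omega}^V = \aleph_{\omega}^N$, observe that $\kappa = \aleph_\omega^V$ by definition. By what we just showed, each $\aleph_n^V$ for $n < \omega$ is a cardinal in $N$ and $\kappa$ itself is a cardinal in $N$; since $\kappa = \sup_{n < \omega} \aleph_n^V$ is the limit of the first $\omega$ cardinals of $N$, we conclude $\kappa = \aleph_\omega^N$.

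No step is particularly delicate: the only thing to verify is that the coded object $X_f$ is genuinely a set of ordinals in $N$ (which it is, since $N$ is transitive and closed under the definable pairing function, all parameters lying in $V \subseteq N$), so that Lemma \ref{approximation} applies. The real content has already been discharged in the proofs of Lemmas \ref{presentation}, \ref{approximation}, and \ref{cardinal_absoluteness}; the present lemma is their immediate combination.
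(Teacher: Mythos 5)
Your proof is correct and follows essentially the same route as the paper: reduce a hypothetical collapse in $N$ to one of the approximation models via Lemma \ref{approximation}, then contradict Lemma \ref{cardinal_absoluteness}. The only difference is that you explicitly code the collapsing function as a set of ordinals (since Lemma \ref{approximation} literally speaks of $X \subseteq \tmop{Ord}$), whereas the paper passes over that routine step; your version is slightly more careful but not a genuinely different argument.
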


\begin{proof}
  If not, then there is a function $f \in N$ which collapses a cardinal in
  $V$. By Lemma \ref{approximation}, $f$ is an element of some model $V
  [A_{\ast} \upharpoonright (\aleph^V_{n + 1})^2, A_{i_0}, \ldots, A_{i_{l -
  1}}]$ as above. But this contradicts Lemma \ref{cardinal_absoluteness}.
\end{proof}

\begin{lemma}
  $\tmop{GCH}$ holds in $N$ below $\aleph_{\omega} $.
\end{lemma}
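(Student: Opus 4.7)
The plan is, for each $k < \omega$, to show $(2^{\aleph_k})^N = \aleph_{k+1}$ by establishing the identity $\mathcal{P}(\aleph_k) \cap N = \mathcal{P}(\aleph_k) \cap V[A_\ast]$ and then exploiting that GCH holds in $V[A_\ast]$.

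First I would prove the inclusion $\mathcal{P}(\aleph_k) \cap N \subseteq V[A_\ast]$. Given $X \in \mathcal{P}(\aleph_k) \cap N$, Lemma \ref{approximation} produces $n < \omega$ (which by monotonicity we may take $\geq k$), $l < \omega$, and $i_0, \ldots, i_{l-1} < \lambda$ with $X \in V[A_\ast \upharpoonright (\aleph_{n+1}^V)^2, A_{i_0}, \ldots, A_{i_{l-1}}]$. By the analysis carried out in the proof of Lemma \ref{cardinal_absoluteness}, this model is a generic extension of $V$ by the product $P_\ast \upharpoonright (\aleph_{n+1}^V)^2 \times (P'')^l$. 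The first factor has cardinality $\aleph_{n+1}$ and so satisfies the $\aleph_{n+2}$-cc in $V$, while $(P'')^l$ is ${<}\aleph_{n+2}$-closed in $V$. Easton's lemma then yields that forcing with $(P'')^l$ over $V[A_\ast \upharpoonright (\aleph_{n+1}^V)^2]$ adds no new sequences of length $<\aleph_{n+2}$, hence no new subsets of $\aleph_k$. Thus $X \in V[A_\ast \upharpoonright (\aleph_{n+1}^V)^2] \subseteq V[A_\ast]$. Since $X$ is a bounded subset of $\aleph_\omega^V$, we get $X \in T_\ast$. Conversely $T_\ast \subseteq N$, so $\mathcal{P}(\aleph_k) \cap V[A_\ast] \subseteq N$, giving the claimed equality.

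Next, Lemma \ref{preservation} (applied to the forcing $P_\ast$, which has the same generic extensions as $P_0$) shows that GCH holds in $V[A_\ast]$, so a bijection $g \colon \aleph_{k+1} \to \mathcal{P}(\aleph_k)$ exists in $V[A_\ast]$. To transfer $g$ into $N$ I encode it as a bounded subset of $\aleph_\omega^V$: fix a bijection $e \colon \aleph_{k+1} \times \aleph_k \to \aleph_{k+1}$ in $V$ and set $g^\ast = \{e(\alpha,\beta) : \alpha < \aleph_{k+1},\ \beta \in g(\alpha)\} \subseteq \aleph_{k+1}$. Then $g^\ast \in V[A_\ast]$ is a bounded subset of $\aleph_\omega^V$, so $g^\ast \in T_\ast \subseteq N$, and inside $N$ we recover $g$ from the parameters $g^\ast, e$ by $g(\alpha) = \{\beta < \aleph_k : e(\alpha,\beta) \in g^\ast\}$. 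By the previous paragraph $g$ is a bijection from $\aleph_{k+1}$ onto $\mathcal{P}(\aleph_k) \cap N$, and with Cantor's theorem we conclude $(2^{\aleph_k})^N = \aleph_{k+1}$, as required.

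The main technical point is the application of Easton's lemma: one must know that the ${<}\aleph_{n+2}$-closed upper forcing $(P'')^l$ adds no bounded subset of $\aleph_{n+1}^V$ once the $\aleph_{n+2}$-cc lower forcing $P_\ast \upharpoonright (\aleph_{n+1}^V)^2$ has been performed. Granting this, the argument reduces to counting inside the ZFC model $V[A_\ast]$ and using that the parameter $T_\ast$ already carries the required witnesses into the symmetric submodel $N$.
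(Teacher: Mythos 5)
Your proof is correct and follows essentially the same route as the paper: reduce via Lemma~\ref{approximation} to a ``mild'' extension $V[A_\ast \upharpoonright (\aleph^V_{n+1})^2, A_{i_0},\ldots,A_{i_{l-1}}]$, use the closure of the upper factors (Easton-style) to strip them off and land in $V[A_\ast \upharpoonright (\aleph_{n+1}^V)^2]\subseteq V[A_\ast]$, invoke GCH there via Lemma~\ref{preservation}, and transfer the counting witness into $N$ through $T_\ast$. You actually spell out two steps the paper leaves implicit --- the Easton lemma justification that $(P'')^l$ adds no new subsets of $\aleph_k$ over the small factor, and the coding of the bijection $g$ as a bounded subset of $\kappa$ so that it lands in $T_\ast$ --- so this is a faithful, slightly more detailed rendering of the same argument.
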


\begin{proof}
  If $X \subseteq \aleph_n$ and $X \in N$ then $X$ is an element of some model
  $V [A_{\ast} \upharpoonright (\aleph^V_{n + 1})^2, A_{i_0}, \ldots, A_{i_{l
  - 1}}]$ as above. Since $A_{i_0}, \ldots, A_{i_{l - 1}}$ do not adjoin new
  subsets of $\aleph_n$ we have that
  \[ X \in V [A_{\ast} \upharpoonright (\aleph^V_{n + 1})^2] . \]
  Hence $\mathcal{P}(\aleph^V_n) \cap N \in V [A_{\ast} \upharpoonright
  (\aleph^V_{n + 1})^2]$. The proof of Lemma \ref{preservation} shows that
  $\tmop{GCH}$ holds in $V [A_{\ast} \upharpoonright (\aleph^V_{n + 1})^2]$.
  Hence there is a bijection $\mathcal{P}(\aleph^V_n) \cap N \leftrightarrow
  \aleph^V_{n + 1}$ in $V [A_{\ast} \upharpoonright (\aleph^V_{n + 1})^2]$ and
  hence in $N$.
\end{proof}

\section{Discussion and Remarks}

The above construction straightforwardly generalises to other cardinals
$\kappa$ of cofinality $\omega$. In that extension, cardinals $\leqslant
\kappa$ are preserved, $\tmop{GCH}$ holds below $\kappa$, and there is a
surjection from $\mathcal{P}(\kappa)$ onto some arbitrarily high cardinal
$\lambda$. To work with singular cardinals $\kappa$ of {\tmem{uncountable}}
cofinality, finiteness properties in the construction have to be replaced by
the property of being of cardinality $< \tmop{cof} (\kappa)$. This yields
results like the following choiceless violation of {\tmname{Silver}}'s theorem
{\cite{Silver}}.

\begin{theorem}
  Let $V$ be any ground model of $\tmop{ZFC} + \tmop{GCH}$ and let $\lambda$
  be some cardinal in $V$. Then there is a cardinal preserving model $N
  \supseteq V$ of the theory $\tmop{ZF} +$``$\tmop{GCH}$ holds below
  $\aleph_{\omega_1}$'' $+$ ``there is a surjection from
  $\mathcal{P}(\aleph_{\omega_1})$ onto $\lambda$''. Moreover, the axiom of
  dependent choices $\tmop{DC}$ holds in $N$.
\end{theorem}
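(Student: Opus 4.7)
The plan is to replicate the construction of the earlier sections with $\aleph_{\omega_1}$ in place of $\aleph_\omega$, countable sets in place of finite sets, and with the additional verification of DC exploiting that the modified forcing is $\sigma$-closed. First I modify the forcing $P$: conditions are tuples $(p_\ast,(a_i,p_i)_{i<\lambda})$ where $p_\ast:\bigcup_{\alpha<\omega_1}[\aleph_\alpha,\delta_\alpha)^2\to 2$ for some sequence $(\delta_\alpha)_{\alpha<\omega_1}$ with $\delta_\alpha\in[\aleph_\alpha,\aleph_{\alpha+1})$; the support $D=\{i:p_i\neq\emptyset\}$ is countable; each $a_i$ is a countable subset of $\aleph_{\omega_1}\setminus\aleph_0$ meeting every interval $[\aleph_\alpha,\aleph_{\alpha+1})$ in at most one point; the ordering is governed by the linking and independence clauses of Definition \ref{def_of_forcing}. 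This forcing is $\sigma$-closed because $\tmop{cof}(\aleph_{\alpha+1})\geqslant\aleph_1$ for every $\alpha<\omega_1$ and countable unions of countable supports and countable $a_i$'s stay countable.

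Next I redo the analogs of Lemmas \ref{preservation}, \ref{chain_condition} (now $\aleph_{\omega_1+2}$-cc, via a $\Delta$-system lemma applied to $\aleph_{\omega_1+2}$ countable sets), \ref{presentation}, \ref{approximation}, and \ref{cardinal_absoluteness}, together with the construction of $N$ using the equivalence relation
\[
A\sim A' \iff \exists\alpha<\omega_1 \bigl((A\oplus A')\upharpoonright \aleph_{\alpha+1}\in V[G_\ast]\wedge (A\oplus A')\upharpoonright [\aleph_{\alpha+1},\aleph_{\omega_1})\in V\bigr).
\]
The only delicate adaptation is the isomorphism argument inside the approximation lemma: shapes of $p$ and $p'$ must be matched on countably many coordinates, and countable linking sets $a_i,a_i'$ must be brought into bijection. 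By $\sigma$-closure I iterate countably many density steps to extend $p,p'$ until such a match is realised, after which the definitions of $\pi,\pi'$ go through verbatim. The remaining arguments---cardinal preservation between $V$ and the approximation models, GCH below $\aleph_{\omega_1}$, and the surjection $\mathcal{P}(\aleph_{\omega_1})\to\lambda$ in $N$---transfer without change.

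For DC I present $N$ explicitly as a symmetric submodel of $V[G]$. The automorphism group $\mathcal{G}$ of $P$ is generated by the shape-preserving index swaps and $\ast$-coordinate rearrangements already used in the proof of Lemma \ref{approximation}, and the normal filter $\mathcal{F}$ is generated by the subgroups fixing pointwise $A_\ast\upharpoonright\aleph_{\alpha+1}^2$ and $A_i$ for each $i$ in some countable $F\subseteq\lambda$, where $F$ ranges over countable subsets of $\lambda$ and $\alpha$ ranges over $\omega_1$. Because countable unions of countable $F$'s and countable suprema below $\omega_1$ remain bounded, $\mathcal{F}$ is closed under countable intersections. Together with $\sigma$-closure of the forcing this yields DC in $N$ by the standard symmetric-extension criterion: given a relation $R\in N$ on $X\in N$ without terminal elements, I recursively build a descending $\omega$-sequence of conditions deciding successive terms of a sequence of $\mathcal{F}$-symmetric names, take a lower bound using $\sigma$-closure, and observe that the intersection of the $\omega$ supporting subgroups still lies in $\mathcal{F}$, making the whole $\omega$-sequence an $\mathcal{F}$-symmetric name and hence an element of $N$.

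The main obstacle is the DC verification, which requires the forcing and the symmetry filter to be simultaneously $\sigma$-closed and $\sigma$-complete; both properties rest on $\tmop{cof}(\aleph_{\omega_1})=\aleph_1$, so the argument genuinely uses the uncountable cofinality. The isomorphism step in the approximation lemma is also more involved in the uncountable setting, but once shape similarity has been arranged across countably many coordinates, the algebraic manipulations are parallel to the $\aleph_\omega$-case.
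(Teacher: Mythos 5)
Your outline follows exactly what the paper itself indicates: the paper disposes of this theorem with a single remark saying the whole $\aleph_\omega$-construction generalises once ``finiteness'' is replaced by ``cardinality $<\operatorname{cof}(\kappa)$,'' and you carry out precisely that substitution (countable supports and linking sets, the equivalence relation with quantifier $\exists\alpha<\omega_1$, a $\Delta$-system argument for countable sets giving $\aleph_{\omega_1+2}$-cc, and $\sigma$-closure coming from $\operatorname{cf}(\aleph_{\alpha+1})\geq\aleph_1$). So far this matches the paper's sketch and is the right shape.

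The DC verification, however, is the one place where you go beyond the paper's remark, and there is a genuine gap. You claim to present $N$ as a symmetric submodel whose automorphism group is ``generated by the shape-preserving index swaps and $\ast$-coordinate rearrangements already used in the proof of Lemma~\ref{approximation}.'' But the maps $\pi$ in that lemma are \emph{not} automorphisms of $P$: they are isomorphisms between two cones $P\upharpoonright p$ and $P\upharpoonright p'$, defined in terms of the specific conditions $p,p'$, and they do not extend to automorphisms of the whole forcing. Nor are $\ast$-coordinate permutations automorphisms of $P$ as it stands, because the first-coordinate block of $\operatorname{dom}(p_\ast)$ must be of the initial-segment form $[\aleph_\alpha,\delta_\alpha)$, which a permutation of $[\aleph_\alpha,\aleph_{\alpha+1})$ will usually destroy, so the image of a condition need not be a condition. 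Thus you have not in fact exhibited $N$ as $HS^{\mathcal G}$ for a group of automorphisms with a $\sigma$-complete normal filter, and the ``standard symmetric-extension criterion'' cannot simply be invoked. To repair this you would need either to show that the HOD-defined $N$ coincides with a genuine symmetric extension of $V$ by a more homogeneous reformulation of the forcing, or to argue DC directly from the approximation lemma --- and the direct route is delicate: a branch built in $V[G]$ accumulates countably many auxiliary parameters $A_{i_j}$, and it is not immediate that the resulting $\omega$-sequence lies in $N$, since the admissible parameters are only those in the transitive closure of $V\cup\{T_\ast,\vec A\}$, which does not by itself contain newly formed countable sequences of the $A_i$.
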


Note that in {\cite{Shelah}}, {\tmname{Saharon Shelah}} studied uncountably
singular cardinal arithmetic under $\tmop{DC}$, without assuming $\tmop{AC}$.
The ``local'' $\tmop{GCH}$ below $\aleph_{\omega_1}$ in the conclusion of the
above Theorem cannot be changed to the property $\tmop{card} ( \bigcup_{\alpha
< \aleph_{\omega_1}} \mathcal{P}(\alpha)) = \aleph_{\omega_1}$ since Theorem
4.6 of {\cite{Shelah}} basically implies that then
$\mathcal{P}(\aleph_{\omega_1})$ would be wellorderable of ordertype
$\geqslant \lambda$. By results of {\cite{ApterKoepke}} an {\tmem{injective}}
failure of $\tmop{SCH}$ with big $\lambda$ has high consistency strength. But
here we are working without assuming any large cardinals.

\end{document}